\spnewtheorem*{admcond}{Admissibility condition}{\bf}{\it}
\numberwithin{equation}{section}
\newcommand{\diff}{\mathrm{d}}
\newcommand{\conv}{\mathop{\mathrm{conv}}}
\newcommand\pd[3][{}]{\mathchoice
\displaystyle\frac{\partial^{#1}{#2}}{\partial{#3}^{#1}}}%
\journalname{Archive for Rational Mechanics and Analysis}
\begin{document}

\title{On dynamics of Lagrangian trajectories for~Hamilton--Jacobi
  equations%
  \thanks{We acknowledge the support of the French Ministry for Science
  and Higher Education. %
  A.S.\ gratefully acknowledges the support of the Simons-IUM
  fellowship and hospitality of Department of Mathematics, University
  of Toronto. %
  K.K.\ acknowledges support of the NSERC Discover Grant.}
}

\author{Konstantin Khanin         \and
        Andrei Sobolevski
}

\institute{K. Khanin \and A. Sobolevski \at
              Institute for Information Transmission Problems of the
              Russian Academy of Sciences\\
              \email{sobolevski@iitp.ru}
           \and
           K. Khanin \at
           Department of Mathematics, University of Toronto
           \and
           A. Sobolevski \at
           National Research University Higher School of Economics (HSE)
}

\date{\notused}

\maketitle

\begin{abstract}
  Characteristic curves of a Hamilton--Jacobi equation can be seen as
  action minimizing trajectories of fluid particles. %
  However this description is valid only for smooth solutions. %
  For nonsmooth ``viscosity'' solutions, which give rise to
  discontinuous velocity fields, this picture holds only up to the
  moment when trajectories hit a shock and cease to minimize the
  Lagrangian action. %

  In this paper we discuss two physically meaningful
    regularisation procedures, one corresponding to vanishing
    viscosity and another to weak noise limit. %
  We show that for any convex Hamiltonian, a viscous
  regularization allows to construct a nonsmooth flow that extends
  particle trajectories and determines dynamics inside the shock
  manifolds. %
  This flow consists of integral curves of a particular
  ``effective'' velocity field, which is uniquely defined
  everywhere in the flow domain and is discontinuous on shock
  manifolds. %
  The effective velocity field arising in the weak noise limit is
  generally non-unique and different from the viscous one, but in both
  cases there is a fundamental self-consistency condition constraining
  the dynamics.

  \keywords{Hamilton-Jacobi equation \and generalized characteristics
    \and  Lagrangian action \and vanishing viscosity \and weak noise}
  \subclass{35F21 \and 49L25 \and 76N10}
\end{abstract}

\section{Introduction}
\label{sec:introduction}

\subsection{The Hamilton--Jacobi equation and viscosity solutions}

The evolutionary Hamilton--Jacobi equation,
\begin{equation}
  \label{eq:1}
  \pd\phi t + H(t,  x, \nabla\phi) = 0,
\end{equation}
appears in diverse mathematical models ranging from analytical
mechanics to combinatorics, condensed matter, turbulence, and
cosmology (see, e.g., a non-exhaustive set of references in
\cite{Bec.J:2007b}). %
In many of these applications the objects of interest are described by
singularities of solutions, which inevitably appear for generic
initial data after a finite time due to the nonlinearity
of~\eqref{eq:1}. %
Therefore one of the central issues both for theory and applications
is to understand the behaviour of the system after singularities
form. %

  In particular, behaviour of characteristics after formation of
  singularities was the subject of intensive studies in the last two
  decades. %
  The main question here is whether there exists a natural extension
  of characteristics as ``particle trajectories'' after a ``particle''
  reaches the shock manifold. %
  The problem is highly nontrivial since the velocity field is not
  well defined on the shock manifold. %
  In a series of works P.~Cannarsa and his collaborators developed the
  notion of generalized characteristics as ``integral'' curves
  satisfying a certain natural differential inclusion condition. %
  It seems however that such an approach is far too general; in
  particular, generalized characteristics are often defined not
  uniquely. %
  The examples of non-uniqueness were constructed in
  \cite{Cannarsa.P:2009} by P.~Cannarsa and Y.~Yu. %
  In fact, as we show in this paper, there are very few examples of
  uniqueness. %
  Apart from the one-dimensional case uniqueness can only happen in
  the case of quadratic Hamiltonians (see
  Section~\ref{sec:regul-with-weak}).

  Instead of considering the most general definition we propose to
  study particle trajectories corresponding to physically relevant
  regularization schemes. %
  In this paper we discuss two such types of regularization: a viscous
  regularization and a regularization by small additive noise. %
  In both cases one can construct an effective velocity field
  corresponding to the limit of vanishing regularization parameter
  (the viscosity, or the intensity of noise). %
  These two limits are essentially different apart from the two
  uniqueness cases described above.

  Central to the present paper is the idea of self-consistent
  selection of effective velocity. %
  This notion of self-consistency allows to define a unique effective
  velocity field in the case of viscous regularization. %
  We then use the notion of higher-order consistency to address the
  problem of uniqueness of particle trajectories. %
  In the case of weak noise regularization the velocity is not
  necessarily uniquely selected by the self-consistency principle.  %
  We, however, believe that entropy maximization condition will lead
  to unique dynamics in the case of general Hamiltonians~$H$ (see
  Section~\ref{sec:conclusions}). %

A useful example to be borne in mind when thinking about these
problems---and arguably the most widely known variant of
equation~\eqref{eq:1}---is the Riemann, or inviscid Burgers,
equation. %
In the physics notation (the dot $\cdot$ for inner product and
$\nabla$ for spatial gradient) this equation has the form
\begin{equation}
  \label{eq:2}
  \pd{ u}t +  u\cdot\nabla u = 0,\qquad u = \nabla\phi.
\end{equation}
The first eq.~\eqref{eq:2} corresponds to the Hamiltonian $H(t, x, 
p) = | p|^2/2$. %
This equation may in turn be considered as a limit of vanishing
viscosity of the Burgers equation
\begin{equation}
  \label{eq:3}
  \pd{ u^\mu}t +  u^\mu\cdot\nabla u^\mu
  = \mu{\nabla}^2 u^\mu,\qquad  u^\mu = \nabla\phi^\mu,
\end{equation}
so solutions of~\eqref{eq:2} can be defined as limits of smooth
solutions to~\eqref{eq:3} as the positive parameter $\mu$ goes to
zero. %

The Burgers equation is in fact very special: it can be exactly mapped
by the Cole--Hopf transformation into the linear heat equation and
therefore explicitly integrated, which in turn allows to explicitly
study the limit $\mu \downarrow 0$ \cite{Hopf.E:1950}. %
Although in the case of general convex Hamiltonian the Hopf--Cole
transformation is not available, the qualitative behaviour of
solutions to a parabolic regularization of~\eqref{eq:1}
\begin{equation}
  \label{eq:4}
  \pd{\phi^\mu}t + H(t,  x, \nabla\phi^\mu) = \mu{\nabla}^2\phi^\mu
\end{equation}
as viscosity vanishes is similar to that for the Burgers equation. %
The limit $\phi(t,  x)= \lim_{\mu \downarrow 0} \phi^\mu(t,  x)$
exists and is called the \emph{entropy} (or \emph{viscosity})
\emph{solution}. %

A theory of weak solutions for a general Hamilton--Jacobi equation,
employing the regularization by infinitesimal viscosity, exists since
the 1970s \cite{Kruzhkov.S:1975,Lions.P:1982,Crandall.M:1992}. %
In the one-dimensional setting this theory is essentially equivalent
to the earlier theory of hyperbolic conservation laws
\cite{Hopf.E:1950,Lax.P:1954,Lax.P:1957,Oleuinik.O:1954}. %
The theory of weak solutions for the Hamilton--Jacobi equation is
closely related to calculus of variations, and introduction of
diffusion corresponds to stochastic control arguments
\cite{Fleming.W:2005}. %
The viewpoint of the present paper is somewhat complementary: the
Hamilton--Jacobi equation is considered as a fluid dynamics model, and
the main goal is to construct a flow of ``fluid particles'' inside the
shocks of a weak solution. %
However it is convenient to start with the Lax--Oleinik variational
pronciple, which provides a purely variational construction of the
viscosity solution. %
Remarkably this construction does not use any explicit viscous
regularization. %

\subsection{The variational construction of viscosity solutions and shocks}

Assume that the Hamiltonian function $H(t,  x,  p)$ is
smooth and strictly convex in the momentum variable $ p$, i.e., is
such that for all $(t,  x)$ the graph of $H(t,  x,  p)$ as a
function of $ p$ lies above any tangent plane and contains no
straight segments. %
This implies that the formula $ v = \nabla_{ p} H(t,  x, 
p)$ establishes a one-to-one correspondence between values of
velocity~$ v$ and momentum~$ p$. %
Moreover, the Lagrangian function
\begin{equation}
  \label{eq:5}
  L(t,  x,  v) = \max_{ p}\,
  [ p\cdot v - H(t,  x,  p)],
\end{equation}
under the above hypotheses is smooth and strictly convex in~$ v$. %
(Note that $L$ may be not finite everywhere: e.g., the relativistic
Hamiltonian $H(t,  x,  p) = \sqrt{1 + | p|^2}$ corresponds to
the Lagrangian $L(t,  x,  v)$ that is defined for $| v| \le
1$ as $-\sqrt{1 - | v|^2}$ and takes value $+\infty$ elsewhere. %
This does not happen if in addition one assumes that the Hamiltonian
$H$ grows superlinearly in $| p|$.) %

The relation between the Lagrangian and the Hamiltonian is symmetric:
they are Legendre--Fenchel conjugate~\eqref{eq:5} to one another. %
This relation can also be expressed in the form of the Young
inequality:
\begin{equation}
  \label{eq:6}
  L(t,  x,  v) + H(t,  x,  p) \ge  v\cdot p,
\end{equation}
which holds for all $ v$ and~$ p$ and turns into equality
whenever $ v = \nabla_{ p}H(t,  x,  p)$ or equivalently
$ p = \nabla_{ v} L(t,  x,  v)$. %
The two maps $ p \mapsto \nabla_{ p}H(t,  x,  p)$ and~$
v \mapsto \nabla_{ v} L(t,  x,  v)$ are thus inverse to each
other; we will call them the \emph{Legendre transforms} at $(t, 
x)$ of $ p$ and of~$ v$. %
(Usually the term ``Legendre transform'' refers to the relation
between the conjugate functions $H$ and~$L$; here we follow the usage
that is adopted by A.~Fathi in his works on weak KAM theory
\cite{Fathi.A:2016} and is more convenient in the present context.) %

Note that if $H(t,  x,  p) = | p|^2/2$, then $L(t,  x, 
v) = | v|^2/2$ and the Legendre transform reduces to the identity
$ v =  p$, blurring the distinction between velocities and
momenta. %
This is another special feature of the (inviscid) Burgers equations. %

Now assume that $\phi(t,  x)$ is a strong solution of the inviscid
equation~\eqref{eq:1}, i.e., a $C^1$ function that satisfies the
equation in the classical sense. %
For an arbitrary differentiable trajectory $\gamma(t)$ the full time
derivative of~$\phi$ along $\gamma$ is given by
\begin{equation}
  \label{eq:7}
  \frac{\diff\phi(t, \gamma)}{\diff t}
  = \pd\phi t + \dot{\gamma}\cdot\nabla\phi
  = \dot{\gamma}\cdot\nabla\phi- H(t, \gamma, \nabla\phi)
  \le L(t, \gamma, \dot{\gamma}),
\end{equation}
where at the last step the Young inequality~\eqref{eq:6} is used. %
This implies a bound for the mechanical action corresponding to the
trajectory $\gamma$:
\begin{equation}
  \label{eq:8}
  \phi(t_2, \gamma(t_2)) \le
  \phi(t_1, \gamma(t_1)) +
  \int_{t_1}^{t_2}
  L(s, \gamma(s), \dot{\gamma}(s))\, \diff s.
\end{equation}
Equality in~\eqref{eq:7} is only achieved if $\dot{\gamma}$ is the
Legendre transform of $\nabla\phi$ at every point $(t, \gamma(t))$:
\begin{equation}
  \label{eq:9}
  \dot{\gamma}(t) = \nabla_{ p}
  H(t, \gamma, \nabla\phi(t, \gamma)).
\end{equation}
Therefore the bound~\eqref{eq:8} is achieved for trajectories
satisfying Hamilton's canonical equations, with momentum given for the
trajectory~$\gamma$ by $ p_{\gamma}(t) := \nabla\phi(t,
\gamma(t))$. %
(The second canonical equation, $\dot{ p} = -\nabla_{ x} H$,
follows from \eqref{eq:1} and~\eqref{eq:9} for a $C^2$ solution~$\phi$
because
\begin{equation} 
  \label{eq:10}
  \dot{ p}_{\gamma}(t) = \pd{\nabla\phi}t
  + \dot{\gamma}\cdot(\nabla\otimes\nabla\phi)
  = -\nabla_{ x}H(t, \gamma, \nabla\phi)
  - \nabla_{ p}H\cdot (\nabla\otimes\nabla \phi)
  + \dot{\gamma}\cdot(\nabla\otimes\nabla\phi),
\end{equation}
where the last two terms cancel.) %

This is a manifestation of the variational \emph{principle of the
  least action}: Hamiltonian trajectories $(\gamma(t), 
p_{\gamma}(t))$ are (locally) action minimizing. %
In particular, if the initial condition
\begin{equation}
  \label{eq:11}
  \phi(t = 0,  y) = \phi_0( y),
\end{equation}
is a fixed smooth function, the identity
\begin{equation}
  \label{eq:12}
  \phi(t,  x) = \phi_0(\gamma(0))
  + \int_0^t L(s, \gamma(s), \dot{\gamma}(s))\, \diff s
\end{equation}
holds for an Euler--Lagrange trajectory $\gamma$ such that 
$\gamma(t) =  x$ and $ p_{\gamma}(0) =
\nabla\phi_0(\gamma(0))$. %

However the least action principle has wider validity: in fact it can
be used to \emph{construct} the viscosity solution corresponding to
the initial data~\eqref{eq:11}:
\begin{equation}
  \label{eq:13}
  \phi(t,  x) = \min_{\gamma\colon \gamma(t) =  x}
  \Bigl(\phi_0(\gamma(0))
  + \int_0^t L(s, \gamma(s), \dot{\gamma}(s))\, \diff s\Bigr).
\end{equation}
This is the celebrated Lax--Oleinik formula (see, e.g.,
\cite{E.W:2000} or \cite{Fathi.A:2016}), which reduces a PDE problem
\eqref{eq:1},~\eqref{eq:11} to the variational problem~\eqref{eq:13}
where minimization is extended to all sufficiently smooth (in fact
absolutely continuous) curves $\gamma$ such that $\gamma(t) =
 x$. %

At those points $(t,  x)$ where the function $\phi$ defined
by~\eqref{eq:13} is smooth in~$ x$, the minimizing trajectory is
unique. %
In this case, the minimizer can be embedded in a smooth family of
minimizing trajectories whose endpoints at time $0$ and $t$ are
continuously distributed about $\gamma(0)$ and $\gamma(t) = 
x$ (a convenient reference is \cite[Section 6.4]{Cannarsa.P:2004},
although this fact is classical). %
A piece of initial data $\phi_0$ gets continuously deformed according
to~\eqref{eq:7} along this bundle of trajectories into a piece of
smooth solution $\phi$ to~\eqref{eq:1} defined in a neighbourhood
of~$ x$ at time~$t$. %
Of course the Hamilton--Jacobi equation is satisfied by $\phi$ in
strong sense at all points where it is differentiable. %

But the crucial feature of~\eqref{eq:13} is that generally there will
be points $(t,  x)$ with several minimizers $\gamma_i$ that
start at different locations $\gamma_i(0)$ and bring the same value
of action to~$ x = \gamma_i(t)$. 
Just as above, each of these Hamiltonian trajectories will be
responsible for a separate smooth ``piece'' of solution. %
Thus for locations $ x'$ close to $ x$ the function $\phi$ will
be represented as a pointwise minimum of these smooth pieces $\phi_i$:
\begin{equation}
  \label{eq:14}
  \phi(t,  x') = \min_i \phi_i(t,  x').
\end{equation}
As all $\gamma_i$ have the same terminal value of action, all the
pieces intersect at $(t,  x)$: $\phi_1(t,  x) = \phi_2(t,  x)
= \dots = \phi(t,  x)$. %
Thus the neighbourhood of $ x$ at time~$t$ is partitioned into
domains where $\phi$ coincides with each of the smooth
functions~$\phi_i$ and satisfies the Hamilton--Jacobi
equation~\eqref{eq:1} in the strong sense. %
These domains are separated by surfaces of various dimensions where
two, or possibly three or more, pieces $\phi_i$ intersect and their
pointwise minimum $\phi$ is not differentiable. %
Such surfaces are called \emph{shock manifolds} or simply
\emph{shocks}. %
Note that a function~$\phi$ defined by the Lax--Oleinik formula is
continuous everywhere, including the shocks; it is its gradient that
suffers a discontinuity. %

In general, there are infinitely many continuous functions that match
the initial condition~\eqref{eq:11} and in the complement of the shock
surfaces are differentiable and satisfy the Hamilton--Jacobi
equation~\eqref{eq:1}, just as~$\phi$ does. %
What distinguishes the function~$\phi$ defined by the variational
construction~\eqref{eq:13} from all these ``weak solutions'', and
grants it with important physical meaning, is that $\phi$ appears in
the limit of vanishing viscosity for the regularized
equation~\eqref{eq:4} with the initial condition~\eqref{eq:11} (see,
e.g., \cite{Lions.P:1982}). %
For a smooth Hamiltonian it can be proved that in a viscosity solution
minimizers can only merge with shocks but never leave them. %

Now observe that in a solution~$\phi$ given by the Lax--Ole{\u\i}nik
formula~\eqref{eq:13} a minimizer that has come to a shock cannot be
continued any longer as a minimizing trajectory: wherever it might go,
there will be other trajectories originated at $t = 0$ that will bring
smaller values of action to the same location. %
Hence for the purpose of the least action description~\eqref{eq:13},
Hamiltonian trajectories become irrelevant as soon as they hit
shocks. %
The set of trajectories which survive as minimizers until time~$t > 0$
is decreasing with $t$, but at all times it is sufficiently large to
cover the whole continuum of final positions. %

\subsection{The Lagrangian picture}

Let us now adopt an alternative ``Lagrangian''
viewpoint, assuming that
trajectories~\eqref{eq:9} are described by material
``particles'' transported by the velocity field $ u(t, x)$,
which is the Legendre transform of the momenta field $ p(t, x)=
\nabla \phi(t, x)$. %
From this new perspective it is no longer natural to accept that
particles annihilate once they reach a shock. %
Can therefore something be said about the dynamics of those particles
that got into the shock, notwithstanding the fact that their
trajectories cease to minimize the action? %
The difficulty in such an approach is related to the discontinuous
nature of the velocity field $ u$, which makes it impossible to
construct classical solutions to the transport equation
$\dot{\gamma}(t) =  u(t, \gamma)$. %

In dimension~$d = 1$ the answer to the question above is readily
available. %
Shocks at each fixed $t$ are isolated points in the $ x$ space and
as soon as a trajectory merges with one of them, it continues to move
with the shock at all later times. %
This definition gives rise to dynamics that
is related to C.~Dafermos' theory of generalized
characteristics (see \cite{Dafermos.C:2005} and references
  therein) which, in fact, can be extended
to a much more general situation of nonconvex Hamiltonians and systems
of conservation laws. %
However, in several space dimensions shock manifolds are extended
surfaces of different codimension, and dynamics of trajectories inside
shocks is by no means trivial. %

For the case of the Burgers equation~\eqref{eq:3}
dynamics inside shocks was constructed in the work of I.~Bogaevsky
\cite{Bogaevsky.I:2004,Bogaevsky.I:2006} using
the following approach. %
Consider the differential equation
\begin{equation}
  \label{eq:15}
  \dot{\gamma}^\mu(t) =  u^\mu(t, \gamma^\mu),\qquad
  \gamma^\mu(0) =  y.
\end{equation}
Since $ u^\mu$ for $\mu > 0$ is a smooth vector field, this
equation defines a family of particle trajectories that form a smooth
flow. %
The next step is to take the limit of this flow as $\mu\downarrow
0$. %
It turns out that this limit exists as a non-differentiable continuous
flow, for which the forward derivative $\dot{\gamma}(t + 0) =
\lim_{\tau\downarrow 0}[\gamma(t + \tau) - \gamma(t)]/\tau$ is
defined everywhere. %
If $\gamma(t)$ is located outside shocks, this derivative coincides
with $ u(t, \gamma(t))$. %
Otherwise the effective velocity~ $\dot{\gamma}(t + 0)$ is
determined by the extremal values of velocities $ u_i =
\nabla\phi_i$ at the shock, and there is an interesting explicit representation for
it: $\dot{\gamma}(t + 0)$ coincides with the center of the smallest
ball that contains all~$ u_i$ (fig.~\ref{fig:bogaevsky}). %
The limiting flow turns out to be \emph{coalescing} (and therefore not
time-reversible): once any two trajectories
intersect, they stay together for all later times.

\begin{figure}
  \centering
  \begin{tabular}{ccc}
    \includegraphics{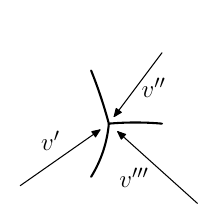} &
    \includegraphics{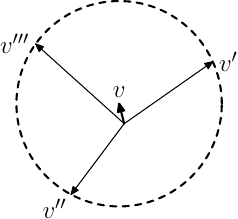} &
    \includegraphics{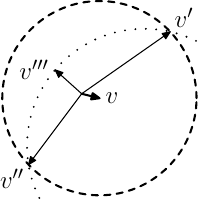} \\
    (a) & (b) & (c)
  \end{tabular}
  \caption{\small Bogaevsky's construction
    \protect\cite{Bogaevsky.I:2004,Bogaevsky.I:2006} of the effective
    velocity $v$ at a triple shock point in dimension $d = 2$: (a) the
    local structure of the flow ($v'$, $v''$, and $v'''$ are the
    limiting values of velocity when the triple point is approached
    from three different domains of smooth flow); (b) the effective
    velocity $v$ is the center of the smallest circle containing the
    three limiting velocities; (c) the smallest circle (dashed) is not
    necessarily the circumscribed one (dotted), so the effective
    velocity may be determined by a proper subset of limiting
    velocities (here, $v'$ and~$v''$).}
  \label{fig:bogaevsky}
\end{figure}

Moreover, it turns out that pieces of the shock manifold may be
classified into \emph{restraining} and \emph{nonrestraining} depending
on whether trajectories stay on them or leave them along pieces of
shock manifold of lower codimension%
\footnote{In the example of fig.~\ref{fig:bogaevsky}, case (c)
  corresponds to a nonrestraining triple point in $d = 2$, which
  trajectories leave through the shock line that divides domains of
  smooth flow with limiting velocities $v'$ and~$v''$; see fig.~3 from
  \cite{Bogaevsky.I:2004} and the discussion therein.}. %
Shocks of codimension one are always restraining; in particular, such
are all shocks in the one-dimensional case. %
Interestingly, this classification, introduced for the first time by
Bogaevsky in~\cite{Bogaevsky.I:2004} (``acute'' and ``obtuse''
superdifferentials of~$\phi$) seems to have been overlooked by
physicists despite its clear physical
significance (S.~Gurbatov and S.~Shandarin, 2005, private
communication). %

The proofs of all these facts
in~\cite{Bogaevsky.I:2004,Bogaevsky.I:2006} were based on specific
properties of the quadratic Hamiltonian and cannot be extended to the
general setting of a convex Hamiltonian. %
In this work we follow a different approach to the
vanishing viscosity limit in the general setting, employing
  the idea of self-consistency and based on the variational
  representation proposed in our earlier work
  \cite[Subsection~4.2]{Bec.J:2007b} (see also
  \cite[Section~3]{Bogaevsky.I:2006}). %
This approach, which employs the fundamental uniqueness of
the possible limiting behaviour of~$\gamma^\mu$, leads to results on
existence, uniqueness, and explicit representation of the limit
velocities. %

It should be remarked that equation \eqref{eq:9}, which relates the
velocity $\dot{\gamma}$ of a trajectory to the gradient~$\nabla\phi(t,
\gamma)$ of the solution, can be seen as defining a generalization of
the gradient flow of the function~$\phi$
\cite{Fathi.A:2016,Bogaevsky.I:2006}. %
Such a flow coincides with the conventional gradient flow when $H( p)
= | p|^2/2$ and $\phi$ is smooth. %
The case of a concave (or semiconcave) nonsmooth~$\phi$ can be handled
using the differential inequality that goes back to the work of
H.~Br{\'e}zis \cite{Brezis.H:1973}. %
A similar approach was also used by P.~Cannarsa and C.~Sinestrari in
the context of propagation of singularities for the eikonal equation
with quadratic Hamiltonian \cite[Lemma~5.6.2]{Cannarsa.P:2004}. %
Later, Cannarsa and Y.~Yu \cite{Cannarsa.P:2009} proposed an
  approach to the case of a general convex Hamiltonian that leads to
  selection of the same effective velocity as in
  \cite{Bec.J:2007b,Bogaevsky.I:2006}. %
  The proofs in \cite{Cannarsa.P:2009} are based on a mollification
  argument.  %
The classification of singluarities into ``restraining'' and
``nonrestraining'' seems however to have been unknown before
Bogaevsky's work~\cite{Bogaevsky.I:2004} even in the quadratic case. %

\subsection{Outline of the paper}

In Section~\ref{sec:soft} we develop a local theory for Lagrangian
particles in a gradient flow defined by a viscosity solution~$\phi$. %
Here we introduce the notions of \emph{admissible velocity} and
\emph{admissible momentum} at a shock, which are central to our
approach. %
The admissible velocity at each point turns out to be the unique
solution to a particular convex minimization problem
\cite{Bogaevsky.I:2006,Bec.J:2007b} (see also
  \cite{Cannarsa.P:2009}), which extends the construction of the
center of the smallest Euclidean ball (cf fig.~\ref{fig:bogaevsky}) to
the general convex case. %

In Section~\ref{sec:vanish-visc-limit} the limit of a flow regularized
with small viscosity is shown to be tangent to the field of admissible
velocities. %
This establishes an existence theorem for integral curves of this
field. %
We further discuss the issue of uniqueness of the limiting
  trajectories and propose a formal perturbative approach that allows
to determine the higher time derivatives of limiting trajectories. %

In Section~\ref{sec:regul-with-weak}, a different approach
  to regularization is discussed, which is in a sense dual to
  regularization with vanishing viscosity: regularization with weak
  noise. %
  Although the two approaches are parallel and in particular both
  feature a self-consistency condition that selects ``good''
  velocities at singular points, it turns out that in the latter
  approach the self-consistent velocity may fail to be unique in
  dimensions greater than~$2$. %
  We also show that the weak noise regularisation generally
  corresponds to a different effective velocity field.

The concluding section contains a discussion and a list of
  several open problems.

\section{Viscosity solutions and admissible gradient vector fields}
\label{sec:soft}

\subsection{Superdifferentials of viscosity solutions}
\label{sec:local-struct-visc}

Let $\phi$ be a viscosity solution to the Hamilton--Jacobi
equation~\eqref{eq:1} with initial data~\eqref{eq:11}. %
We shall use the following standard facts, for which we refer the
reader again to the recent and useful exposition in
\cite[Section 6.4]{Cannarsa.P:2004}, although many of these facts
date from 50 and more years ago: %
(i) the function $\phi$ is locally uniformly semiconcave in $(t, 
x)$ variables; (ii) if there is a single minimizer coming to~$(t, 
x)$, then $\phi$ is differentiable at this point, $C^2$ smooth in some
its neighbourhood, and
\begin{align}
  \label{eq:16}
  \phi(t + \tau,  x + \xi)
  & =  \phi(t,  x) + \pd\phi t\,\tau
  + \nabla\phi\cdot\xi + o(|\tau| + |\xi|) \\
  \label{eq:17}
  & =  \phi(t,  x) - H(t,  x, \nabla\phi)\, \tau
  + \nabla\phi\cdot \xi + o(|\tau| + |\xi|);
\end{align}
(iii) if $\phi$ is not differentiable at $(t,  x)$ and there is a
finite number of minimizers $\gamma_i$ such that $\gamma_i(t) =
 x$, then each of them corresponds to a different smooth
branch~$\phi_i$ of solution defined in the neighbourhood of~$(t, 
x)$. %
Then the Lax--Oleinik formula implies that
\begin{align}
  \label{eq:18}
  \phi(t + \tau,  x + \xi)
  & = \min_i \phi_i(t + \tau,  x + \xi) \\
  \label{eq:19}
  & = \phi(t,  x) + \min_i (-H_i\tau +  p_i\cdot\xi) +  o(|\tau| + |\xi|),
\end{align}
where $ p_i := \nabla\phi_i(t,  x)$ and $H_i := H(t,  x, 
p_i)$. %

In the latter case neither of expressions $-H_i \tau + 
p_i\cdot\xi$ provides a valid linear approximation to the
difference $\phi(t + \tau,  x + \xi) - \phi(t,  x)$ at all
points, but they all \emph{majorize} this difference up to a remainder
that is either linear or higher-order, depending on $\tau$
and~$\xi$. %
Evidently, so does the linear form $-H\tau +  p\cdot\xi$ for any
convex combination
\begin{equation}
  \label{eq:20}
   p = \sum_i \lambda_i  p_i,\qquad
  H = \sum_i \lambda_i H_i
\end{equation}
with $\lambda_i \ge 0$, $\sum_i \lambda_i = 1$. %
In convex analysis these convex combinations are called
\emph{supergradients} of $\phi$ at $(t,  x)$ and the whole
collecton of them, which is a convex polytope with vertices $(-H_i,
 p_i)$, is called the \emph{superdifferential} of $\phi$
\cite{Cannarsa.P:2004,Rockafellar.R:1970}. %
We use for the superdifferential the notation $\partial\phi(t, 
x)$. %

To avoid a possible misunderstanding it should be noted that, although
uniqueness of a minimizer coming to~$(t, x)$ implies differentiability
in~$x$ of a nonsmooth solution~$\phi$ to the Hamilton--Jacobi equation
at $t$ and earlier times, it does \emph{not} imply its
differentiability at any $t + \tau > t$. %
However the differentiability is recovered as
  $\tau\downarrow 0$, because the corresponding superdifferential
  shrinks to the gradient of~$\phi$ at $(t, x)$. %
Such points $(t, x)$ where the differentiability cannot be extended to
an open neighbourhood in spacetime are often called
\emph{preshocks}\label{pg:preshock} (see \cite{Bec.J:2007b}
  and references therein) and correspond to conjugate points of a
corresponding variational problem; a classification of all the
possible combinations of shocks and preshocks in dimensions $d = 2$
and~$d = 3$ is provided in \cite{Bogaevsky.I:2002}. %
Note that at a preshock the linearization of $\phi_i$ does not fully
anticipate the shocks at times $t + \tau$ for any $\tau > 0$. %

Under a viscous regularization $\phi^\mu$ of the solution~$\phi$, a
shock point $(t,  x)$ is ``smeared'' over a small area where
$(\pd{\phi^\mu} t, \nabla\phi^\mu)$ takes on all values from the relative
interior of $\partial\phi(t,  x)$. %
Thus, intuitively, $\partial\phi(t,  x)$ is a set of values taken
by the spacetime gradient of the nonsmooth function~$\phi$ in an
infinitesimal neighbourhood of~$(t,  x)$. %

Completing the gradient field with superdifferentials at points where
$\phi$ is not smooth recovers, in a weaker sense, the continuity of
the map $(t,  x) \mapsto \partial\phi(t,  x)$. %
Indeed, suppose $(t_n,  x_n)$ converges to $(t,  x)$ and the
sequence $(-H_n,  p_n) \in \partial\phi(t_n,  x_n)$ has a limit
point $(-H,  p)$. %
By definition of superdifferential,
\begin{equation}
  \label{eq:22}
  \phi(t_n + \tau,  x_n + \xi) - \phi(t_n,  x_n)
  \le -H_n\, \tau +  p_n\cdot\xi +  o(|\tau| + |\xi|);
\end{equation}
passing here to the limit and using continuity of~$\phi$, we see that
$(-H,  p)\in \partial\phi(t,  x)$. %
Therefore the superdifferential $\partial\phi(t,  x)$ contains all
the limit points of superdifferentials $\partial\phi(t_n,  x_n)$ as
$(t_n,  x_n)$ converges to $(t,  x)$. %

To make this continuity argument rigorous, some control is
needed over the remainder term in~\eqref{eq:22}. %
This is easy for convex or concave functions
\cite{Rockafellar.R:1970}, for which such inequalities hold without
remainders. %
A wider function class, which contains viscosity solutions of
Hamilton--Jacobi equations and in which such control is still
possible, is formed by semiconvex or semiconcave functions
\cite{Cannarsa.P:2004}. %
We refer a reader interested in proofs of this and other convex
analytic results used in this paper to monographs
\cite{Cannarsa.P:2004,Rockafellar.R:1970}. %

\subsection{Admissible velocities and admissible momenta}
\label{sec:admissibility}

This section will describe a procedure that gives a unique possible
velocity and momentum at each point $(t,  x)$. %
The construction is based solely on the convexity of Hamiltonian in
the momentum variable. %

If $(t,  x)$ is a regular point, i.e., not a point of shock, then
the velocity $ u(t,  x)$ and the momentum $ p(t,  x)$ are
naturally defined. %
Suppose now that $(t,  x)$ is a point of shock formed by
intersection of smooth branches $\phi_i$, $i \in \mathcal I$. %
For a particle starting from a shock point $(t, x)$ its possible
velocity $ v$ must correspond to one of the the ``available''
momenta, i.e., to a momentum $ p$ that belongs to the convex hull
of the momenta $ p_i=\nabla \phi_i(t,  x)$, $i\in\mathcal
I$, or
equivalently to the $ p\/$-projection of the superdifferential
$\partial\phi(t,  x)$. %

In fact more can be said. %
For an infinitesimal positive $\tau$, when the particle has already
left its original location with
velocity~$v$, not all branches $\phi_i$ will be relevant for the
solution $\phi$ at a point $(t+\tau, x+ v\tau)$ as $\tau \downarrow
0$, but only those that contribute to the (linear approximation of the
solution), i.e., the minimum in $\min_{i\in\mathcal I}(-H_i + p_i\cdot
v)$ (cf.~\eqref{eq:19} with $\xi = v\tau$). %
The branches not contributing to the minimum can be discarded. %

Denote the set of relevant indices
\begin{equation}
  \label{eq:23}
  I( v) :=
  \{j\in\mathcal I \colon -H_j + p_j\cdot v
  = \min_{i\in\mathcal I}(-H_i + p_i\cdot v)\};
\end{equation}
it is nonempty because the minimum is attained due to convexity
of~$H(t,  x, \cdot)$. %
We can now postulate that any possible velocity $ v$ of a
Lagrangian particle inside a shock satisfies the following
condition. %

\begin{admcond}
  A velocity $ v^*$ is said to be \emph{admissible} at $(t, x)$ if
  the corresponding momentum $ p^* = \nabla_{ v} L(t,  x, 
  v^*)$ belongs to the convex hull of momenta $ p_i$ with $i \in
  I( v^*)$:
  \begin{equation}
    \label{eq:24}
     p^* \in \conv\{ p_j\colon j\in I( v^*)\}.
  \end{equation}
  This value of momentum $p^*$ is also called \emph{admissible} at
  $(t, x)$. %
\end{admcond}

Observe that since the index set $I( v)$ depends on~$ v$, this
condition can be viewed as a kind of self-consistency requirement
for trajectories. %

Equivalently, one can write 
\begin{equation}
  \label{eq:39}
   v^* \in \nabla_{ p}
  H(t,  x, \conv\{ p_j\colon j\in I( v^*)\}). 
\end{equation}
Note that, in contrast with early theory of generalized
characteristics for Hamilton--Jacobi equations
\cite[Definition 5.5.1]{Cannarsa.P:2004}, there is no convex hull
taken in~\eqref{eq:39} \emph{after} the (nonlinear) map $\nabla_{ p}
H(t, x, \cdot)$ is apllied to the superdifferential of~$\phi$ at~$(t,
x)$, even though the resulting set in the velocity space is generally
non-convex. %

The above definition allows to fix the velocity $ v^*$ uniquely. %

\begin{theorem}[uniqueness of admissible velocity]
  \label{lem:uniq}
  Let $\phi$ be a viscosity solution to the Cauchy problem
  \eqref{eq:1},~\eqref{eq:11}. %
  Then at any $(t,  x)$ there exists a unique admissible velocity
  $ v^* =  v^*(t,  x)$, which is the unique point of the
  global minimum for the function
  \begin{equation}
    \label{eq:25}
    \hat L( v) := L(t,  x,  v) - \min_{i\in\mathcal I} (-H_i + 
    p_i\cdot v).
  \end{equation}
\end{theorem}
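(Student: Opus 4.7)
The plan is to reformulate $\hat L$ as the sum of a smooth strictly convex part and a piecewise affine convex part, then extract the admissibility condition from the subdifferential optimality criterion. First I would rewrite
\[
  \hat L( v) = L(t, x, v) + \max_{i\in\mathfrak I}\bigl(H_i -  p_i\cdot v\bigr),
\]
noting that the right-hand maximum is simply $-\min_i(-H_i +  p_i\cdot v)$. The first summand is smooth and strictly convex in~$ v$ by the standing hypotheses on~$H$, and the second is a finite pointwise maximum of affine functions, hence convex and finite everywhere. Thus $\hat L$ is strictly convex. Under the supplementary assumption of superlinear growth of~$H$ in~$ p$ (mentioned after \eqref{eq:5}), $L$ is also superlinear in~$ v$, while the $\max$ term grows at most linearly; hence $\hat L$ is coercive. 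Standard convex analysis then yields existence and uniqueness of a global minimizer, which I will denote $ v^*$.

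Next I would characterize $ v^*$ via the first-order optimality condition $0\in\partial\hat L( v^*)$. The smooth summand contributes $\nabla_{ v}L(t, x, v^*) =: p^*$, while the subdifferential of a finite maximum of affine functions is precisely the convex hull of the gradients of the active pieces (see \cite{Rockafellar.R:1970}). Since the active pieces of $\max_i(H_i- p_i\cdot v)$ at~$ v^*$ are indexed by exactly $I( v^*)$ from~\eqref{eq:23}, this gives
\[
  \partial\hat L( v^*) =  p^* - \conv\{ p_j : j\in I( v^*)\},
\]
so the optimality condition $0\in\partial\hat L( v^*)$ is equivalent to $p^* \in \conv\{ p_j : j\in I( v^*)\}$, i.e., to admissibility~\eqref{eq:24}. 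Thus the unique minimizer of $\hat L$ is admissible.

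For the converse, if $ v$ is any admissible velocity in the sense of \eqref{eq:24}, then the same subdifferential computation shows $0\in\partial\hat L( v)$. Since $\hat L$ is convex, this is a sufficient condition for global minimality, and by strict convexity the minimizer is unique, so $ v =  v^*$. This establishes both that there is a unique admissible velocity at $(t, x)$ and that it coincides with the unique minimizer of~$\hat L$.

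The main delicacy is the subdifferential calculation at a point where the active set $I( v^*)$ is strictly larger than at nearby $ v$, which is exactly the generic situation at a shock and is what permits the convex-hull averaging in~\eqref{eq:24}; here one must use the standard formula for the subdifferential of a finite maximum rather than naive differentiation of a single active branch. A minor but worth-mentioning point is that the argument localizes: even without assuming global superlinearity of~$L$, coercivity of $\hat L$ can be obtained on any bounded neighbourhood containing the finitely many Legendre transforms $\nabla_{ p}H(t, x, p_i)$, which is enough since the minimizer must lie in the convex hull of these values.
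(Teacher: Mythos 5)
Your core argument is sound and is essentially the route the paper itself takes: the paper's proof in the general case is exactly the optimality condition $0\in\partial\hat L(v^*)$ combined with the convex-hull formula for the subdifferential of a pointwise maximum (its finite-case argument is an elementary separating-vector version of the same computation), and your converse direction (admissible $\Rightarrow$ $0\in\partial\hat L$ $\Rightarrow$ global minimum, unique by strict convexity) matches the paper's uniqueness step. The identification of the active set of $\max_i(H_i-p_i\cdot v)$ with $I(v)$ from \eqref{eq:23} and the equivalence of $0\in\partial\hat L(v)$ with \eqref{eq:24} are correct.

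Two points need repair, however. First, as written your proof only covers a finite family of branches: you use the subdifferential formula for a \emph{finite} maximum of affine functions and speak of finitely many Legendre transforms, whereas the theorem is asserted at every $(t,x)$ and the set of minimizers reaching $(t,x)$ may be infinite (it is compact, with bounded momenta). The paper closes exactly this case by invoking Clarke's theorem (after Ioffe--Levin) on the subdifferential of a supremum over a compact index family; your argument needs the same ingredient to reach the stated generality. Second, the coercivity bookkeeping is off: superlinear growth of $H$ in $p$ yields finiteness of $L$ everywhere, while the coercivity (boundedness of level sets) of $L$ that you actually need already follows from the standing assumption that $H$ is finite (smooth) in $p$, so no supplementary hypothesis is required --- this is how the paper argues. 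Moreover your fallback localization claim is false: the minimizer of $\hat L$ need not lie in $\conv\{v_i\}$; what is true is that $p^*=\nabla_v L(t,x,v^*)$ lies in $\conv\{p_j\colon j\in I(v^*)\}$, and the image of this hull under the nonlinear map $\nabla_p H(t,x,\cdot)$ generally leaves $\conv\{v_i\}$ --- the discrepancy between these two sets is precisely the point made in Section~\ref{sec:regul-with-weak}. So drop that remark and establish coercivity from finiteness of $H$ instead.
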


\begin{proof}
  Recall that $L(t,  x,  v)$ is a strictly convex function
  of~$ v$ because of assumptions formulated
  in~\S\ref{sec:introduction}. %
  Rewriting
  \begin{equation}
    \label{eq:26}
    L_i( v) := L(t,  x,  v) + H_i -  p_i\cdot  v,\qquad
    \hat L( v)
    = \max_{i\in \mathcal I}\,L_i( v),
  \end{equation}
  we see that $\hat L( v)$ is a pointwise maximum of strictly
  convex functions and therefore is strictly convex itself. %
  Furthermore, because the Hamiltonian $H(t,  x,  p)$ is assumed
  to be finite for all~$ p$, its conjugate Lagrangian $L(t,  x,
   v)$ grows faster than any linear function as $| v|$
  increases, and thus all its level sets are bounded. %
  Therefore $\hat L( v)$ attains its minimum at a unique value of
  velocity~$ v^*$. %

  To simplify the presentation of ideas we start with an (elementary)
  proof of the theorem in the particular case when $I( v^*)$ is
  finite. %
  We show first that the point of minimum $ v^*$ satisfies the
  admissibility condition~\eqref{eq:24}. %
  Indeed,
  \begin{equation}
    \nabla_{ v}L_i( v^*)
    = \nabla_{ v}L(t,  x,  v^*) -  p_i
    =  p^*- p_i.
  \end{equation}
  Suppose that $ p^*$ does not belong to the convex hull of $
  p_j$, $j\in I( v^*)$. %
  Then there exists a vector $ h$ such that $( p^* -  p_j)
  \cdot  h < 0$ for all $j\in I( v^*)$. %
  It follows that $L_j( v^* + \epsilon h) < L_j( v^*)$ for
  all $j\in I( v^*)$ if $\epsilon > 0$ is sufficiently small. %
  Hence, $\hat L( v^* + \epsilon h) < \hat L( v^*)$ for
  sufficiently small $\epsilon$, which contradicts to our assumption
  that $ v^*$ is a point of minimum. %
  This contradiction proves that $ v^*$ is admissible. %

  To prove uniqueness we show that if $\hat{ v}$ is admissible then
  it is a (necessarily unique) point of global minimum for the
  strictly convex function $\hat L$. %
  Using the strict convexity of $L_j$, we obtain
  \begin{align}
    L_j(\hat{ v} +  h) & = L(t,  x, \hat{ v} + h) + H_j
    -  p_j\cdot(\hat{ v} + h) \\
    & > L_j(\hat{ v}) + \nabla_{ v}L(t,  x, \hat{
      v})\cdot h -  p_j\cdot h = L_j(\hat{ v}) + (\hat{
      p} -  p_j)\cdot  h,
  \end{align}
  where $\hat{ p}$ is the Legendre transform of $\hat{ v}$. %
  Since $\hat{ v}$ is admissible, $\hat{ p} = \sum_j\lambda_j
  p_j$, where all $\lambda_j\geq 0$ and $\sum_j \lambda_j = 1$. %
  Hence, $\sum_j\lambda_j(\hat{ p} -  p_j)\cdot  h =
  [(\sum_j\lambda_j)\hat{ p} - \sum_j\lambda_j p_j]\cdot h =
  [\hat{ p} - \hat{ p}]\cdot h = 0$. %
  It follows that $(\hat{ p} -  p_j)\cdot  h > 0$ for at
  least one $j\in I(\hat{ v})$. %
  This implies that $\hat L(\hat{ v} +  h)
  > \hat L(\hat{ v})$, which means that $\hat{ v}$ is a point
  of global minimum for $\hat L$. %

  In the general case of an arbitrary~$I( v^*)$, only the first
  argument, namely admissibility of the global minimum~$ v^*$, needs
  modification. %
  We shall use the following result of Clarke based on earlier work of
  Ioffe and Levin \cite[Theorem 2.8.2 and
  Corollary~1]{Clarke.F:1990}. %
  Let $\hat L( v) = \max_{i \in \mathcal I} L_i(v)$, where
  $\mathcal I$ is a compact topological space, and suppose that all
  functions $L_i(\cdot)$ are convex and Lipschitz with the same
  constant and $I( v)$ is the set of $i$'s for which the maximum is
  attained; then the subdifferential $\partial \hat L( v)$ is a
  weakly-$*$ closed convex hull of the union of $\partial L_i( v)$ for
  $i\in I( v)$. %
  (To justufy compactness of~$\mathcal I$, observe that one can use
  the values of momenta $ p_i$ instead of the abstract indices~$i$ and
  that the set of minimizers coming to~$(t, x)$ is closed and their
  momenta are bounded.) %
  Now take into account that in our case
  $\partial L_i( v) = \nabla_{ v} L(t, x, v) - p_i$ and that $ v^*$ is
  the point of minimum, i.e., that
  \begin{displaymath}
     0 \in \partial\hat L( v^*) = \conv\{\,\nabla_{ v} L(t,  x,
     v^*) -  p_i\colon i \in I( v^*)\,\} = \{\, p^* - 
    p_i\colon i \in I( v^*)\,\};
  \end{displaymath}
  this coincides with the admissibility condition $ p^* \in \conv
  \{\, p_i\colon i \in I( v^*)\,\}$ \eqref{eq:24}. %
\end{proof}

Thus the admissibility property, first formulated above in the hardly
manageable combinatorial form~\eqref{eq:24}, turns out to be the
optimality condition for a convex minimization
problem~\eqref{eq:25}. %
In particular, if $\phi$ is differentiable at $(t,  x)$, then $\hat
L( v) = L(t,  x,  v) + H(t,  x, \nabla\phi) -
\nabla\phi\cdot v$ and the minimum in~\eqref{eq:25} is achieved at
the Legendre transform of~$\nabla\phi$. %
We thus recover Hamilton's equation~\eqref{eq:9}. %

The following reformulation will clarify the connection between
admissibility and the original construction for the Burgers equation
proposed by Bogaevsky in \cite{Bogaevsky.I:2004,Bogaevsky.I:2006}. %
Let $ v_i = \nabla_{ p} H(t,  x,  p_i)$ be the velocity
corresponding to the limit momentum~$ p_i$ and observe that $
p_i = \nabla_{ v}L(t,  x,  v_i)$. %
The Legendre duality implies that $H_i = H(t,  x, 
p_i) =  p_i\cdot  v_i - L(t,  x,  v_i)$ and
therefore~\eqref{eq:26} assumes the form
\begin{equation}
  \label{eq:27}
  \hat L( v) = \max_{i\in \mathcal I}\,
  [L(t,  x,  v) - L(t,  x,  v_i)
  - \nabla_{ v}L(t,  x,  v_i)\cdot( v -  v_i)]
  = \max_{i\in \mathcal I}\, D_L^{t,  x}( v\mid  v_i).
\end{equation}
The quantity in square brackets is known as the \emph{Bregman
  divergence} $D_L^{t,  x}( v\mid  v_i)$ of vector $ v$
with respect to~$ v_i$, a non-symmetric measure of separation of
vectors with respect to the convex function~$L(t,  x, \cdot)$
\cite{Bregman.L:1967}. %
Theorem~\ref{lem:uniq} terefore means that the admissible velocity is
the center of the the smallest ``Bregman sphere'' containing all $
v_i$, $i \in \mathcal I$. %
When $L(t,  x,  v) = | v|^2/2$, the Bregman divergence
reduces to (half) the squared distance between the two vectors. %
Therefore the admissible velocity $ v^*$ exactly conicides with the
centre of smallest ball containing all~$ v_i$, and we recover the
result of~\cite{Bogaevsky.I:2004,Bogaevsky.I:2006}. %

Finally, let us discuss the ``physical'' meaning of the function $\hat
L$. %
Consider an infinitesimal movement from $(t,  x)$ with velocity
$ v$. %
It follows from the least action principle that $\phi(t,  x) + L(t,
 x,  v)\, \diff t - \phi(t + \diff t,  x +  v\,\diff t)
\geq 0$. %
It is easy to see that to the linear order in $\diff t$
\begin{equation}
  \phi(t,  x) + L(t,  x,  v)\, \diff t
  - \phi(t + \diff t,  x +  v\, \diff t)
  = \hat L( v)\, \diff t.
\end{equation}
Hence the unique admissible velocity $ v^*$ minimizes the rate of
growth of the difference in action between the true minimizers and
trajectories of particles on shocks. %
In other words, the trajectory inside a shock cannot be a minimizer
but it does its best to keep its surplus action growing as slowly as
possible.

\section{The vanishing viscosity limit}
\label{sec:vanish-visc-limit}

\subsection{Admissibility and uniqueness of limit velocities}
\label{sec:admiss-limit-veloc}

We have thus constructed a canonical vector field of
admissible velocities $ v^*(t,  x) = \nabla_{ p} H(t,  x,
 p^*(t,  x))$ that corresponds to a given viscosity solution
$\phi$ of the Cauchy problem \eqref{eq:1},~\eqref{eq:11}. %
Notice that in general this vector field is discontinuous on the shock
manifold. %

To see how the vector field of admissible velocities arises for
Lagrangian particles inside shocks, consider the vanishing viscosity
limit for a flow corresponding to the parabolic regularization
\begin{equation}
  \label{eq:28}
  \pd{\phi^\mu}t + H(t,  x, \nabla\phi^\mu) = \mu{\nabla}^2\phi^\mu, 
  \qquad \mu > 0,
\end{equation}
of the Hamilton--Jacobi equation~\eqref{eq:1}. %

For sufficiently smooth initial data $\phi_0( y) = \phi^\mu(t = 0,
 y)$ the partial differential equation~\eqref{eq:28} has a globally
defined strong solution~$\phi^\mu$, which is locally Lipschitz with a
constant independent of~$\mu$. %
Moreover, $\phi^\mu$ converges as $\mu \downarrow 0$ to the unique
viscosity solution~$\phi$ corresponding to the same initial data. %
Proofs of these facts may be found, e.g., in~\cite{Lions.P:1982},
where they are established for $\phi_0 \in C^{2, \alpha}$. %

Consider now the transport equation
\begin{equation}
  \label{eq:29}
  \dot{\gamma}^\mu(t) = \nabla_{ p} H(t, \gamma^\mu,
  \nabla\phi^\mu(t, \gamma^\mu)), \qquad
  \gamma^\mu(0) =  y.
\end{equation}
For $\mu > 0$ this equation has a unique solution
which continuously depends on the initial location~$ y$. %
Fix a point $(t_0,  x_0)$ with $t_0 > 0$ and pick trajectories
$\gamma^\mu$ for all sufficiently small $\mu > 0$ such that
$\gamma^\mu(t_0) \to  x_0$ as $\mu \downarrow 0$. %
The uniform Lipschitz property of solutions~$\phi^\mu$ implies that
the curves $\gamma^\mu$ are uniformly bounded and equicontinuous on
some interval containing~$t_0$. %
Hence there exists a curve $\bar{\gamma}$ and a sequence $\mu_i
\downarrow 0$ such that $\lim_{\mu_i \downarrow 0} \gamma^{\mu_i} =
\bar{\gamma}$ uniformly in~$t$ on that interval. %
Note that all $\gamma^{\mu_i}$ and~$\bar{\gamma}$ are also
Lipschitz with a constant independent of~$\mu$ and that
$\bar{\gamma}(t_0) =  x_0$. %

Let furthermore $\bar{ v}$ be a limit point of the ``forward
velocity'' of the curve~$\bar{\gamma}$ at $(t_0,  x_0)$, i.e.,
let for some sequence $\tau_k \downarrow 0$
\begin{equation}
  \label{eq:30}
  \bar{ v} = \lim_{\tau_k\downarrow 0} \frac 1{\tau_k}
  [\bar{\gamma}(t_0 + \tau_k) - \bar{\gamma}(t_0)].
\end{equation}
We cannot conclude \textit{a priori} that the curve $\bar{\gamma}$
or the velocity $\bar{ v}$ are uniquely defined. %
However it turns out that $\bar{ v}$ must satisfy the admissibility
condition with respect to the solution~$\phi$ and therefore it
coincides with the unique admissible velocity~$ v^*$. %

\begin{theorem}
  \label{lem:limitpoint}
  Let $\phi$ be a viscosity solution of the Hamilton--Jacobi equaiton
  \eqref{eq:1} with initial data~\eqref{eq:11}. %
  For any $(t_0, x_0)$ with $t_0 > 0$, any sequence $\mu_i \downarrow
  0$ such that the corresponding solutions of~\eqref{eq:29} converge
  to a curve $\bar\gamma$ uniformly in an interval containing~$t_0$ and
  $\bar\gamma(t_0) = x_0$, and any limit value $\bar v$ \eqref{eq:30},
  the velocity $\bar v$ is admissible at $(t_0, x_0)$, i.e., $\bar v
  \in \nabla_p H(t_0, x_0, \conv \{p_i\colon i\in I(\bar{ v})\})$.
\end{theorem}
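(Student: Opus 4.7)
My plan is to use Theorem~\ref{lem:uniq} to translate the admissibility condition \eqref{eq:24} into the assertion that $\bar v$ is the unique minimizer of the strictly convex function $\hat L$. It therefore suffices to identify $\bar v$ with $v^*$, and my strategy combines an energy identity along the viscous flow with the superdifferential expansion \eqref{eq:19} of $\phi$ at $(t_0, x_0)$.

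The first ingredient is an action identity along $\gamma^\mu$. Since $\dot\gamma^\mu = \nabla_p H(t, \gamma^\mu, \nabla\phi^\mu)$ saturates Young's inequality~\eqref{eq:6}, combining with the viscous equation~\eqref{eq:28} gives $(d/dt)\phi^\mu(t, \gamma^\mu) = L(t, \gamma^\mu, \dot\gamma^\mu) + \mu\Delta\phi^\mu$. I would integrate this on $[t_0, t_0+\tau]$ and use the uniform (in $\mu$) semiconcavity of $\phi^\mu$---standard for convex $H$, yielding $\Delta\phi^\mu \le C$---to bound the last term by $C\mu\tau$. Passing $\mu_i \downarrow 0$ then uses uniform convergence $\phi^{\mu_i}\to \phi$ for the boundary terms, together with lower semicontinuity of the action under $C^0$-convergence of curves with weak-$\ast$ convergent velocities (obtained from the uniform Lipschitz bound on $\gamma^{\mu_i}$), to yield
\begin{equation*}
  \int_{t_0}^{t_0+\tau} L(s, \bar\gamma, \dot{\bar\gamma})\, ds \ge \phi(t_0+\tau, \bar\gamma(t_0+\tau)) - \phi(t_0, x_0).
\end{equation*}
Dividing by $\tau_k$ from the sequence defining $\bar v$, Jensen's inequality with $\tau_k^{-1}[\bar\gamma(t_0+\tau_k) - x_0] = \bar v + o(1)$ bounds the left side below by $L(t_0, x_0, \bar v) + o(1)$, while Lipschitz continuity of $\phi$ combined with \eqref{eq:19} applied at $\xi = \bar v \tau_k$ shows the right side tends to $\min_{i\in\mathfrak I}(p_i\cdot\bar v - H_i)$. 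Subtracting produces $\hat L(\bar v) \ge 0$, which is really just Young's inequality and does not yet identify $\bar v$ with the unique minimizer~$v^*$.

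The main obstacle is therefore to obtain the matching upper bound, equivalent to the combinatorial condition $\bar p \in \conv\{p_j : j\in I(\bar v)\}$ with $\bar p$ the Legendre transform of $\bar v$. I would analyse the limit of the momenta $p^{\mu_i}(s) = \nabla\phi^{\mu_i}(s, \gamma^{\mu_i}(s))$: uniform boundedness from semiconcavity provides a weak-$\ast$ subsequential limit $\bar p(\cdot) \in L^\infty(t_0, t_0+\tau)$, with $\bar p(s) \in \partial\phi(s, \bar\gamma(s))$ almost everywhere by upper semicontinuity of the superdifferential map for semiconcave functions. The crucial and technically most delicate step is to show that the time-averaged limit on $[t_0, t_0+\tau_k]$ lies in the \emph{restricted} hull $\conv\{p_j : j\in I(\bar v)\}$ rather than the full $\partial\phi(t_0, x_0) = \conv\{p_i : i\in \mathfrak I\}$; this is precisely the self-consistency principle, expressing that branches $\phi_i$ with $i\notin I(\bar v)$ fail to achieve the minimum in $\min_i(-H_i + p_i \cdot \bar v)$ along the forward direction $\{x_0 + \bar v s : s > 0\}$ and so contribute asymptotically negligible time to the trajectory $\gamma^{\mu_i}$, which can be checked by a Taylor analysis of the viscous boundary layer. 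Once this refinement is established, time-averaging $\dot\gamma^{\mu_i} = \nabla_p H(s, \gamma^{\mu_i}, p^{\mu_i})$ on $[t_0, t_0+\tau_k]$ and passing to the limit yields $\bar v \in \nabla_p H(t_0, x_0, \conv\{p_j : j\in I(\bar v)\})$, which is the admissibility condition~\eqref{eq:24}.
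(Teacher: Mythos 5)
Your first step (the action inequality) is, as you yourself concede, vacuous: the inequality $\int_{t_0}^{t_0+\tau} L(s,\bar\gamma,\dot{\bar\gamma})\,ds \ge \phi(t_0+\tau,\bar\gamma(t_0+\tau))-\phi(t_0,x_0)$ is just the Lax--Oleinik bound \eqref{eq:8} valid for \emph{any} curve, and $\hat L(v)\ge 0$ holds identically by Young's inequality \eqref{eq:6}, so no information about the viscous limit is extracted there. The entire substance of the theorem lies in the step you defer: showing that for small $s-t_0$ the momenta $\nabla\phi^{\mu_i}(s,\gamma^{\mu_i}(s))$ are constrained by the face of $\partial\phi(t_0,x_0)$ spanned by $\{(-H_j,p_j)\colon j\in I(\bar v)\}$, with the Legendre transform $\bar p=\nabla_v L(t_0,x_0,\bar v)$ of the limit velocity forced into its $p$-projection. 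You assert this ``can be checked by a Taylor analysis of the viscous boundary layer,'' but that is precisely what has to be proved, and it is where the paper's proof does all its work: assuming $\bar p$ is \emph{not} in the $p$-projection of that face, Lemma~\ref{lem:geometric} produces a positive gap $M$ by combining the supporting-hyperplane inequality \eqref{eq:34} with the monotonicity \eqref{eq:33} of $\nabla_p H$, and Lemma~\ref{lem:computation} converts it, by comparing the growth of $\phi^\mu(t,\cdot)-\bar p\cdot(\cdot)$ along $\gamma^\mu$ and along the ray $\Gamma(t)=x_0+\bar v(t-t_0)$, into the statement that $\gamma^\mu$ must leave a fixed cone around $\Gamma$, contradicting \eqref{eq:30}. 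None of that mechanism appears in your outline.

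Moreover, even if the concentration statement were granted, your concluding step would not give the theorem. You average $\dot\gamma^{\mu_i}(s)=\nabla_p H(s,\gamma^{\mu_i},p^{\mu_i}(s))$ in time, but $\nabla_p H$ is nonlinear: the limit of such time averages lies only in $\conv\,\nabla_p H\bigl(t_0,x_0,\conv\{p_j\colon j\in I(\bar v)\}\bigr)$, not in the (generally non-convex) set $\nabla_p H\bigl(t_0,x_0,\conv\{p_j\colon j\in I(\bar v)\}\bigr)$ itself; likewise the weak-$*$ limit of the momenta need not equal the Legendre transform of $\bar v$, so membership of that limit in the restricted hull does not translate into the condition \eqref{eq:24} on $\bar p$. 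At best your scheme recovers the weaker differential inclusion of generalized characteristics, whereas the paper explicitly emphasizes (see the discussion after \eqref{eq:39}) that admissibility is sharper exactly because no convex hull is taken after applying $\nabla_p H$. Closing both gaps requires the self-consistency argument through $\bar p$ and the face $\bar S$ that the paper's contradiction proof supplies.
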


\begin{proof}
Our general strategy in what follows is a proof by contradiction:
assume that $\bar{ v}$ is not admissible and show that it then
cannot be a limit velocity. %

We first set up some notation regarding geometry of the closed convex
set $\partial\phi(t_0,  x_0)$. %
Denote by $(s,  p)$ the space-time co-tangent coordinates, with $s$
a scalar dual to the time subspace and $ p$ a vector dual to the
$d$-dimensional configuration subspace. %
Let $\Lambda_{\bar{ v}}$ be the hyperplane supporting the convex
compact set $\partial\phi(t_0,  x_0)$ from below with the slope
corresponding to the velocity~$\bar{ v}$:
\begin{equation}
  \label{eq:31}
  \Lambda_{\bar{ v}}
  = \{(s,  p)\colon s = - p\cdot \bar{ v}
  + \min_{i\in\mathcal I}( p_i\cdot \bar{ v} - H_i)\}
\end{equation}
and let $\bar S$ be the intersection of $\Lambda_{\bar{ v}}$ and of
the superdifferential~$\partial\phi(t_0,  x_0)$, i.e., the face of
$\partial\phi(t_0,  x_0)$ spanned by vertices $(-H_i,  p_i)$
with indices in $I(\bar{ v})$ [cf~equation~\eqref{eq:23}]. %
In this geometric setting the admissibility condition for momentum
\eqref{eq:24} can be formulated in the following way: an admissible
momentum must belong to the $ p\/$-projection of the set~$\bar
S$. %
Denote also $\bar\Phi := \min_{i\in\mathcal I}( p_i\cdot \bar{
  v} - H_i)$. %

\begin{lemma}
  \label{lem:geometric}
  If $\bar{ p}$ does not belong to the $ p$-projection of~$\bar
  S$, then
  \begin{equation}
    \label{eq:32}
    M := \min_{(s,  p) \in \partial\phi(t_0,  x_0)}
    [s +  p\cdot \bar{ v} - \bar\Phi
    + ( p - \bar{ p})\cdot (\nabla_{ p} H(t_0,  x_0,  p)
    - \bar{ v})] > 0,
  \end{equation}
\end{lemma}

\begin{proof}
  It should be noted that $M$ in~\eqref{eq:32} is an auxiliary
  quantity, which plays role in the subsequent proof but has no
  geometric meaning by itself. %
  It can be seen as a sum of two parts, each of which is nonnegative
  for reasons related to the convexity of the Hamiltonian~$H$ and the
  superdifferential~$\partial\phi(t_0,  x_0)$, and which cannot
  simultaneously vanish. %

  Denote $\bar s = -\bar{ p}\cdot\bar{ v} + \bar\Phi$ and
  observe that the point $(\bar s, \bar{ p})$ cannot belong to
  $\partial\phi(t_0,  x_0)$ because $(\bar s, \bar{
    p})\in\Lambda_{\bar{ v}}$ but the $ p$-projection of the
  face $\bar S = \Lambda_{\bar{ v}} \cap \partial\phi(t_0, 
  x_0)$ does not contain $\bar{ p}$. %

  Monotonicity of the gradient $\nabla_{ p}H(t_0,  x_0,  p)$
  of the convex function~$H$ implies that for $ p \neq \bar{ p}$
  \begin{equation}
    \label{eq:33}
    ( p - \bar{ p})\cdot
    (\nabla_{{ p}} H(t_0,  x_0,  p) - \bar{ v}) > 0.
  \end{equation}
  Indeed, from the strict convexity of $H(t_0,  x_0, \cdot)$ in
  momentum it follows that
  \begin{displaymath}
    \begin{gathered}
      H(t_0,  x_0,  p) > H(t_0,  x_0, \bar{ p}) + ( p -
      \bar{ p})\cdot
      \nabla_{ p} H(t_0,  x_0, \bar{ p}),\\
      H(t_0,  x_0, \bar{ p}) > H(t_0,  x_0,  p) +
      (\bar{ p} -  p)\cdot \nabla_{ p} H(t_0,  x_0,  p)
    \end{gathered}
  \end{displaymath}
  whenever $ p \neq \bar{ p}$ and in particular when $(s,  p)
  \in \bar S$. %
  Adding these two inequalities and taking into account that
  $\nabla_{ p} H(t_0,  x_0, \bar{ p}) = \bar{ v}$, we
  get~\eqref{eq:33}. %

  Furthermore, as $\Lambda_{\bar{ v}}$ supports
  $\partial\phi(t_0,  x_0)$ from below, for all $(s,  p)
  \in \partial\phi(t_0,  x_0)$ we have
  \begin{equation}
    \label{eq:34}
    s +  p\cdot \bar{ v} - \bar\Phi \ge 0
  \end{equation}
  with equality only when $(s,  p) \in \Lambda_{\bar{ v}}$. %
  Thus the function of $(s,  p)$ in the square brackets
  in~\eqref{eq:32} is strictly positive on~$\partial\phi(t_0, 
  x_0)$. %
  Indeed, if $(s,  p)$ belongs to the face $\bar S$,
  then~\eqref{eq:33} is positive, and otherwise \eqref{eq:34} is
  positive. %
\end{proof}

In the rest of the proof it will be convenient to use a different
rearrangment of the expression in square brackets in~\eqref{eq:32}:
\begin{equation}
  \label{eq:35}
  [\text I] - [\text{II}] :=
  [s + ( p - \bar{ p})\cdot
  \nabla_{ p} H(t_0,  x_0,  p)]
  - [\bar\Phi - \bar{ p}\cdot \bar{ v}].
\end{equation}

Next we provide a precise meaning to the intuitive idea that for $(t,
 x)$ sufficiently close to $(t_0,  x_0)$ and $\mu$ sufficiently
small, the values of the function $\phi^\mu$ and its derivatives are
close to those for the linearization
\begin{equation}
  \label{eq:60}
  \phi(t_0,  x_0) + \min_{i\in\mathcal I}\,
  [ p_i\cdot ( x -  x_0) - (t - t_0)\, H_i]
\end{equation}
of the viscosity solution~$\phi$ near~$(t_0,  x_0)$. %

For $\epsilon > 0$ let $V_\epsilon$ be the $\epsilon$-neighbourhood
of~$\partial\phi(t_0,  x_0)$. %
Choose $\epsilon < M/(6 + 3|\bar{ v}|)$ so small that $(\bar s,
\bar{ p}) \notin V_\epsilon$ and
\begin{equation}
  \label{eq:36}
  \min_{(s,  p)\in V_\epsilon}([\text I] - [\text{II}]) \ge 2M/3 > 0.
\end{equation}

Using the upper semicontinuity of the superdifferential (see e.g.\
\cite[Proposition 3.3.4]{Cannarsa.P:2004} or
\cite[Corollary 24.5.1]{Rockafellar.R:1970}, where a similar result
is proved for convex functions), choose $R = R(\epsilon) > 0$ and $T =
T(\epsilon) > 0$ such that for all $(t,  x) \in \mathcal{D}_{T, R}
:= \{(t,  x)\colon 0 \le t - t_0 \le T,\ | x -  x_0| \le R\}$
the superdifferential $\partial\phi(t,  x)$ is contained in the set
$V_{\epsilon/2}$. %

Reducing $T$, $R$ if necessary and using the Lipschitz property
of~$\phi$,~$\phi^\mu$ (which implies boundedness of momenta) and
continuity of $\nabla_{ p} H(t,  x,  p)$ in $(t,  x)$
variables, we can assume in addition that for all $(t,  x) \in
\mathcal{D}_{T, R}$
\begin{equation}
  \label{eq:38}
  |( p - \bar{ p})
  \cdot \nabla_{ p} H(t,  x,  p)
  - ( p - \bar{ p})
  \cdot \nabla_{ p} H(t_0,  x_0,  p)| < \epsilon
\end{equation}

Denote $\Gamma(t) :=  x_0 + \bar{ v}(t - t_0)$. %
Reducing~$T$ once again, we can guarantee that for all~$t_0 \le t \le
t_0 + T$ both $|\Gamma(t) - \Gamma(t_0)| < qR$ and
$|\bar{\gamma}(t) - \bar{\gamma}(t_0)| < qR$ with any $0 < q <
1$ (this margin is needed because we will approximate
$\bar{\gamma}$ by $\gamma^\mu$, which must also belong
to~$\mathcal{D}_{T, R}$) and that, moreover, $\partial\phi(t,
\Gamma(t))$ is contained in $\bar S_{\epsilon/2}$, the
$\epsilon/2$-neighbourhood of~$\bar S$. %
The latter is possible because all limit points of $\partial\phi(t,
\Gamma(t))$ as $t \downarrow t_0$ belong to the face
of~$\partial\phi(t_0,  x_0)$ that corresponds to the
direction~$\bar{ v}$, i.e., to~$\bar S$.  A proof of this result,
which refines the upper semicontinutity property of superdifferentials
mentioned above, can be found e.g., in the context of convex functions
in~\cite[Theorem~24.6]{Rockafellar.R:1970}; its generalization to the
semiconcave case is evident. %

In what follows we will refer to the values of $\mu_i$ from the
sequence that determines~$\bar{\gamma}$, but will drop the
index~$i$ to simplfy the notation. %
Choose $\bar\mu = \bar\mu(\epsilon)$ sufficiently small so that the
following three conditions hold:
\begin{displaymath}
  (t, \bar{\gamma}(t)) \in \mathcal{D}_{T, R}\quad
  \text{for $\mu < \bar\mu(\epsilon)$}
\end{displaymath}
(this is indeed possible because $(t, \bar{\gamma}(t)) \in
\mathcal{D}_{T, qR}$ with $q < 1$),
\begin{gather}
  \label{eq:40}
  \Bigl(\pd{\phi^\mu}t(t,  x), \nabla\phi^\mu(t,  x)\Bigr) \in V_\epsilon 
\end{gather}
everywhere in~$\mathcal{D}_{T, R}$, and
\begin{equation}
  \label{eq:41}
  \Bigl(\pd{\phi^\mu}t(t, \Gamma(t)), \nabla\phi^\mu(t,
  \Gamma(t))\Bigr) \in \bar S_{\epsilon}
\end{equation}
for $t_0 < t < t_0 + T$. %
The latter two conditions hold because convergence of semiconcave
functions~$\phi^\mu$ to~$\phi$ implies that limit points of their
derivatives belong to $\partial\phi(t,  x) \subset V_{\epsilon/2}$
(in particular, $\partial\phi(t, \Gamma(t)) \subset \bar
S_{\epsilon/2}$ along the trajectory~$\Gamma$). %

We are now set for the concluding argument. %
Assume that $\bar{ v}$ is not an admissible velocity and therefore
the correspondent momentum $\bar{ p}$ does not belong to the $
p\/$-projection of~$\bar S$. %
We are going to show that in this case, although trajectories
$\gamma^\mu$ may occasionally pass close to the
trajectory~$\Gamma(t) =  x_0 + \bar{ v}(t - t_0)$, any
possible limiting value of velocity of the limit trajectory
$\bar{\gamma}$ as $\tau = t - t_0 \downarrow 0$ differs
from~$\bar{ v}$ by a positive constant. %
The central argument is provided by the following lemma.

\begin{lemma}
  \label{lem:computation}
  Under conditions of Lemma~\ref{lem:geometric} fix arbitrary positive
  $\tau < T/3$ and $\delta < M/[6(L + |\bar{ p}|)]$, where $L$ is
  the common spatial Lipschitz constant of~$\phi^\mu$ in
  $\mathcal{D}_{T, R}$ for $0 < \mu < \bar\mu$. %
  Define the cone $K_\delta := \{(t,  x) \in \mathcal{D}_{T,
    R}\colon | x - \Gamma(t)| < \delta(t - t_0)\}$ and suppose that
  $(t_0 + \tau, \bar{\gamma}(t_0 + \tau)) \in K_\delta$. %
  Then $(t, \gamma^\mu(t)) \notin K_\delta$ for all $\mu < \bar\mu$
  and $t$ such that $3\tau < t - t_0 < T$.
\end{lemma}

\begin{proof}
  The full time derivative of the function $(t,  x) \mapsto
  \phi^\mu(t,  x) - \bar { p}\cdot x$ along~$\gamma^\mu$
  is given by
  \begin{multline}
    \frac \diff{\diff t}[\phi^\mu(t, \gamma^\mu(t))
    - \bar{ p}\cdot\gamma^\mu(t)]
    = \pd{\phi^\mu}t(t, \gamma^\mu) + (\nabla\phi^\mu(t, \gamma^\mu)
    - \bar{ p})\cdot \dot{\gamma}^\mu \\
    = \pd{\phi^\mu}t(t, \gamma^\mu)
    + (\nabla\phi^\mu(t, \gamma^\mu) - \bar{ p})\cdot
    \nabla_{ p} H(t, \gamma^\mu, \nabla\phi^\mu(t, \gamma^\mu)) \\
    \ge \pd{\phi^\mu}t(t, \gamma^\mu)
    + (\nabla\phi^\mu(t, \gamma^\mu) - \bar{ p})\cdot
    \nabla_{ p} H(t_0,  x_0, \nabla\phi^\mu(t, \gamma^\mu))
    - \epsilon,
  \end{multline}
  where the last inequality follows from~\eqref{eq:38}. %
  Integrating this from $t_0 + \tau$ to~$t$ we get
  \begin{multline}
    \tag*{[I]}
    \phi^\mu(t, \gamma^\mu(t)) - \bar{ p}\cdot\gamma^\mu(t)
    - \phi^\mu(t_0 + \tau, \gamma^\mu(t_0 + \tau))
    + \bar{ p}\cdot\gamma^\mu(t_0 + \tau) \\
    \ge \int_{t_0 + \tau}^t
    \Bigl[\pd{\phi^\mu}t(t', \gamma^\mu)
    + (\nabla\phi^\mu(t', \gamma^\mu) - \bar{ p})\cdot
    \nabla_{ p} H(t_0,  x_0, \nabla\phi^\mu(t',
    \gamma^\mu))\Bigr]
    \, \diff t'\\
    -\epsilon(t - t_0 - \tau)
  \end{multline}
  On the other hand,
  \begin{multline}
    \frac \diff{\diff t}[\phi^\mu(t, \Gamma(t))
    - \bar{ p}\cdot\Gamma(t)]
    = \pd{\phi^\mu}t(t, \Gamma(t)) +
    (\nabla\phi^\mu(t, \Gamma(t)) - \bar{ p})\cdot\bar{ v} \\
    \le \bar\Phi - \bar{ p}\cdot\bar{ v}
    + \epsilon\,(1 + |\bar{ v}|),
  \end{multline}
  where we took into account~\eqref{eq:41} and the fact that $s + 
  p\cdot \bar{ v} = \bar\Phi$ for all $(s,  p) \in \bar S$
  (cf.~\eqref{eq:34}). %
  It follows that
  \begin{multline}
    \tag*{[II]}
    \phi^\mu(t, \Gamma(t)) - \bar{ p}\cdot\Gamma(t)
    - \phi^\mu(t_0 + \tau, \Gamma(t_0 + \tau))
    + \bar{ p}\cdot\Gamma(t_0 + \tau) \\
    \le \int_{t_0 + \tau}^t (\bar\Phi
    - \bar{ p}\cdot\bar{ v})\, \diff t'
    + \epsilon\,(1 + |\bar{ v}|)(t - t_0 - \tau).
  \end{multline}

  Subtracting [II] from [I],
  using~\eqref{eq:35},~\eqref{eq:36},~\eqref{eq:40} and observing that
  the Lipschitz property of $\phi^\mu$ (and correspondingly that of
  $ x\mapsto \phi^\mu(t,  x) - \bar{ p}\cdot  x$, with the
  constant $L + |\bar{ p}|$) implies that
  \begin{multline}
    \bigl|\phi^\mu(t_0 + \tau, \Gamma(t_0 + \tau))
    - \bar{ p}\cdot\Gamma(t_0 + \tau) \\
    - \phi^\mu(t_0 + \tau, \gamma^\mu(t_0 + \tau))
    + \bar{ p}\cdot\gamma^\mu(t_0 + \tau)\bigr| 
    \le (L + |\bar{ p}|)\delta\tau,
  \end{multline}
  we get
  \begin{multline}
    \label{eq:42}
    \phi^\mu(t, \gamma^\mu(t)) - \bar{ p}\cdot\gamma^\mu(t)
    - \phi^\mu(t, \Gamma(t)) + \bar{ p}\cdot\Gamma(t) \\
    \ge [\tfrac 23 M - \epsilon(2 + |\bar{ v}|)](t - t_0 - \tau)
    - (L + |\bar{ p}|)\delta\tau.
  \end{multline}
  Using again the Lipschitz property and the inequality $\epsilon \le
  M/(6 + 3|\bar{ v}|)$, we get
  \begin{equation}
    \label{eq:43}
    |\gamma^\mu(t) - \Gamma(t)|
    \ge \frac {\frac 23 M - \epsilon(2 + |\bar{ v}|)}
    {L + |\bar p|}(t - t_0 - \tau)
    - \delta\tau
    \ge \frac M{3(L + |\bar p|)} (t - t_0 - \tau) - \delta\tau.
  \end{equation}
  Since $\delta < M/[6(L + | p|)]$, this means that
  $\gamma^\mu(t)$ stays outside $K_\delta$ for $t - t_0 > 3\tau$. %
\end{proof}

We can now conclude the proof. %
Suppose that there is a sequence $t_i \downarrow t_0$ such that $(t_i,
\bar{\gamma}(t_i)) \in K_\delta$. %
Then for all sufficiently small~$\mu$ Lemma~\ref{lem:computation}
implies that $(t,\allowbreak \gamma^\mu(t)) \notin K_\delta$ when $t > t_0 +
3(t_i - t_0)$ for all~$i$, which means in turn that $(t,
\bar{\gamma}(t))$ also cannot belong to~$K_\delta$ for such~$t$. %
As $t_i \downarrow t_0$, the trajectory $\bar{\gamma}$ has to stay
outside~$K_\delta$ for all $t_0 < t < t_0 + T$, a contradiction with
what has been assumed. %
This proves Theorem~\ref{lem:limitpoint}. %
\end{proof}

A somewhat simpler proof of a similar statement can be found
  in \cite[Theorem~3.2]{Cannarsa.P:2009}; we presented the proof
  above to make our presentation self-contained. %
  Note that the presented proof also stresses the relevance of
  self-consistency condition.

\subsection{The uniqueness problem for limit trajectories }
\label{sec:uniq-probl-limit}

We have seen that limit trajectories $\bar{\gamma}$ of
solutions~$\gamma^\mu$ to the transport equation~\eqref{eq:29} are
tangent in forward time to the unique discontinuous field
of admissible velocities~$ v^*$, i.e., that $\dot{\bar{\gamma}}(t + 0)
= v^*(t, x)$ for any~$t$ and for any limit trajectory $\bar{\gamma}$
passing through some~$ x = \bar{\gamma}(t)$. %
This however does not imply that limit trajectories
\emph{themselves} are unique. %

There are in fact two different uniqueness problems: that for limit
trajectories as $\mu\to 0$ for the viscous regularization
\eqref{eq:29}, 
and that for integral curves of the differential equation
\begin{equation}
  \label{eq:64}
  \dot{\gamma}(t + 0)  =  v^*(t, \gamma).
\end{equation}
Since any limit trajectory of~\eqref{eq:29} is an integral curve
of~\eqref{eq:64} according to Theorem~\ref{lem:limitpoint}, uniqueness
for integral curves would imply uniqueness for limit trajectories. %
However, it is \textit{a priori} possbile that more than one integral
curve passes through the same singular point, but the vanishing
viscosity regularization selects only one among these curves as a
limit trajectory. %

Uniqueness of limit trajectories can be established in the case when
the Hamiltonian is quadratic in the momentum variable. %
  This follows from a particular differential inequality for the
  squared separation between two close trajectories, which follows
  from semiconcavity of the solution~$\phi$:
  \begin{equation}
    \label{eq:21}
    \frac{\diff}{\diff t} \frac{|x - y|^2}2
    = (v - w)\cdot(x - y) \le \frac C2|x - y|^2
  \end{equation}
  whenever $v \in \partial\phi(t, x)$, $w \in \partial\phi(t, y)$. %
  This inequality was used in~\cite{Brezis.H:1973} to control
  the expansion of the squared distance between trajectories in terms
  of the semiconcavity constant of the solution~$\phi$.
Indeed, two limit trajectories passing through the same point $(t, x)$
cannot diverge by a finite distance in finite time, because their
viscous regularizations must stay arbitrarily close to one another
over this time interval, provided these regularizations are close
enough at time~$t$. %
Hence, as observed in \cite{Bogaevsky.I:2004,Bogaevsky.I:2006}, the
limit flow~$\bar{\gamma}$ is defined uniquely and is therefore
\emph{coalescing}: once two trajectories intersect, they stay together
at all later times. %

In a recent paper \cite{Stromberg.T:2013} T.~Str{\"o}mberg
  proposes to \emph{postulate} the existence of a function~$\Phi$ such
  that $\Phi \ge 0$, $\Phi^{-1}(0) = \{0\}$, and $(v -
  w)\cdot\nabla\Phi(x - y) \le C\Phi(x - y)$ whenever $v$ and~$w$ are
  admissible velocities at $(t, x)$ and~$(t, y)$, respectively
  \cite[Condition~(D)]{Stromberg.T:2013}. %
  This allows to essentially repeat the above argument and to
  establish uniqueness, but of course existence of such a function
  $\Phi$ is a strong restriction and general conditions for it to hold
  are not known except in one spatial dimension or when the
  Hamiltonian is quadratic. %
  Under the same circumstances uniqueness holds for generalized
  characteristics \cite{Cannarsa.P:2009}. %
  It should be noted that the paper \cite{Stromberg.T:2013} also
  features construction of limit trajectories by a different
  regularization procedure, unrelated to viscous regularization.

Observe that the differential inequality argument
  outlined above bypasses the issue of integral curves altogether. %
It is therefore interesting to consider the existence and uniqueness
issues for the differential equation \eqref{eq:64} irrespective of
viscous regularization. %

\subsection{Perturbation theory for limit trajectories}
\label{sec:second-order-pert}

Here we show how uniqueness for the differential equation
\eqref{eq:64} can be established from a formal perturbative analysis based on rather
strong regularity assumptions. %
Let the shock manifold of~$\phi$ be locally finitely generated, i.e., suppose that
at each shock point~$(t_0,  x_0)$ there is a finite number~$k$ of
minimizers connecting that point with the initial data. %
This implies that in a neighbourhood of~$(t_0,  x_0)$ the
solution~$\phi$ may be represented locally as a pointwise minimum of a
finite number~$k$ of $C^2$ smooth branches~$\phi_i$, each of which
satisfies the Hamilton--Jacobi equation classically, and that locally
the shock manifold is composed of $C^1$ smooth pieces of different
dimensions. %
(The case of \emph{preshocks}, introduced on p.~\pageref{pg:preshock},
provides an exception to the condition of smoothness and should be
considered separately.)

One can show that on each smooth piece of the shock manifold the
spacetime field~$(1,  v^*)$ determined by admissible velocities is
a Lipschitz vector field tangent to the piece. %
The usual ODE arguments then show that the flow generated by the
vector field~$ v^*$ is uniquely defined on smooth pieces of the
shock manifold, as well as in the bulk where the solution~$\phi$ is
smooth. %

In Section~\ref{sec:admissibility} it was shown that at a shock
point~$(t_0,  x_0)$ not all of the intersecting branches $\phi_i$
of solution are relevant for the integral curve~$\gamma$ at times
$t > t_0$, but only those with $i \in I( v^*)$, i.e., those that
are relevant in the first-order (linear) approximation to both the
solution~$\phi$ and the integral curve~$\gamma$. %
Denote the corresponding index set with $\mathcal I_1 := I( v^*)$. %

Uniqueness of integral curves can only fail at shock connections:
there must be at least two pieces of shock manifold that have a common
point $(t_0,  x_0)$ and share the same tangent spacetime direction
$(1,  v^*)$ but at later times carry two disjoint trajectories both
issued from~$ x_0$ at time~$t_0$ with velocity~$ v^*$. %
Note that this is not possible if $|\mathcal I_1| \le d + 1$, where
$d$ is the spatial dimension, and the velocities $ v_i$ are in
general position: indeed, in this situation removal of any branch
$\phi_i$ with $i \in \mathcal I_1$ would change the admissible
velocity~$ v^*$. %

In fact a (formal) perturbative analysis of an integral curve
$\gamma$ in higher orders of approximation reveals a nested
sequence of finite index sets $\mathcal I_1 \supseteq \mathcal I_2
\supseteq \dots$ such that $\mathcal I_k$ lists branches relevant for
the integral curve in $k$th order, and the intersection $\mathcal I =
\cap_{k \ge 1} \mathcal I_k$ is not empty (i.e., the sequence
stabilizes). %
In particular if $|\mathcal I| \le d + 1$, then the integral
curve~$\gamma$ is defined uniquely. %

In what follows we illustrate this procedure in the second order and
obtain~$\mathcal I_2$. %

Take an integral curve~$\gamma$ such that $\gamma(t_0) = 
x_0$ and assume it to be twice differentiable at~$t_0$ in ``forward''
time:
\begin{equation}
  \label{eq:55}
  \gamma(t) =  x_0 + (t - t_0) v^* + \frac{(t -
    t_0)^2}2  a + o\bigl((t - t_0)^2\bigr),
\end{equation}
where $ v^*$ is the vector of admissible velocity at~$(t_0, 
x_0)$ and $ a$ is the yet unknown acceleration of~$\gamma$
at~$t_0$. %
At times $t = t_0 + \tau$ with sufficiently small~$\tau > 0$ the point
$\gamma(t)$ lies at intersection of a possibly smaller set of
branches $\phi_i$, which all have the same value at~$(t,
\gamma(t))$. %
The first two time derivatives of this common value along $\gamma$
can be expressed as follows. %

Using the Hamilton--Jacobi equation~\eqref{eq:1} and denoting $
p_i^{\gamma}(t) = \nabla\phi_i(t, \gamma(t))$, for the first
time derivative we get
\begin{equation}
  \label{eq:56}
  \dot\phi_i(t, \gamma(t)) = \partial_t\phi_i(t, \gamma(t))
  + \dot{\gamma}(t)\cdot \nabla\phi_i(t, \gamma(t)) 
  = \dot{\gamma}(t)\cdot p_i^{\gamma}(t)
  - H(t, \gamma(t),  p_i^{\gamma}(t)).
\end{equation}
In particular 
\begin{equation}
  \label{eq:62}
  \dot\phi_i(t_0,  x_0) =  v^*\cdot p_i - H_i,
\end{equation}
where $ p_i =  p_i^{\gamma}(t_0)$ and $H_i = H(t_0,  x_0,
 p_i)$ as above. %
Using the Legendre duality (see \eqref{eq:6} and discussion
thereafter), we can modify expression~\eqref{eq:56} as follows:
\begin{multline}
  \label{eq:46}
  \dot\phi_i(t, \bar{\gamma}(t))
  = \dot{\gamma}(t)\cdot p_i^{\gamma}(t)
  - H(t, \gamma(t),  p_i^{\gamma}(t)) \\
  = (\dot{\gamma}(t) -  v_i^{\gamma}(t))\cdot
  \nabla_{ v} L(t, \gamma(t),  v_i^{\gamma}(t))
  + L(t, \gamma(t),  v_i^{\gamma}(t)),
\end{multline}
where $ v_i^{\gamma}(t) = \nabla_{ p} H(t, \gamma,
\nabla\phi_i(t, \gamma(t)))$ and $ p_i^{\gamma}(t) =
\nabla_{ v} L(t, \gamma(t),  v_i^{\gamma}(t)) = \nabla
\phi_i(t, \gamma(t))$ are values of velocity and momentum that
correspond to the gradient~$ p_i^{\gamma}(t)$ along the
curve~$\gamma$. %
Recalling the expression for Bregman divergence~\eqref{eq:27}
\begin{equation}
  \label{eq:50}
  D_L^{t,  x}( v^*\mid  v) = L(t,  x,  v^*) - L(t, 
  x,  v) - ( v^* -  v)\cdot\nabla_{ v} L(t,  x,  v),
\end{equation}
we can now express the time derivative $\dot\phi_i(t, \gamma(t))$
in the form
\begin{equation}
  \label{eq:48}
  \dot\phi_i(t, \gamma(t)) = L(t, \gamma(t), \dot{\gamma}(t))
  - D^{t, \gamma(t)}_L (\dot{\gamma}(t) \mid 
  v_i^{\gamma}(t)).
\end{equation}

Observe that the difference between $\phi_i(t, \gamma(t))$ and the
mechanical action along the curve~$\gamma(\cdot)$ decreases as
the (negative) integral over $(t_0, t)$ of the Bregman divergence
$D^{\cdot, \bar{\gamma}(\cdot)}_L (\dot{\gamma} \mid 
v_i^{\gamma})$. %
Of course subtracting the common quantity from the values of branches
$\phi_i(t, \gamma(t))$ for all~$i$ does not change the mutual order
of these values. %
We notice that the bigger is the Bregman divergence $D^{\cdot,
  \gamma(\cdot)}_L (\dot{\gamma} \mid  v_i^{\gamma})$, the
faster decreases this difference: up to the second order in~$t - t_0$,
the value $\min_i \phi_i$ will be attained at the branch or branches
for which $\dot{\gamma}(t)$ is the most distant (in the Bregman
sense) from~$ v_i^{\gamma}(t)$.

To obtain the second time derivative we differentiate the r.h.s.\
of~\eqref{eq:56} to get
\begin{multline}
  \label{eq:57}
  \ddot\phi_i(t, \gamma(t))
  = \ddot{\gamma}(t)\cdot p_i^{\gamma}(t)
  + \dot{\gamma}(t)\cdot\dot{ p}_i^{\gamma}(t)
  -  v_i^{\gamma}(t)\cdot\dot{ p}_i^{\gamma}(t) \\
  - \Bigl[\pd{}t H(t, \gamma(t),  p_i^{\gamma}(t))
  + \dot{\gamma}(t)\cdot \nabla_{ x}
  H(t, \gamma(t),  p_i^{\gamma}(t))\Bigr].
\end{multline}
It is convenient again to consider the second time derivative not
of~$\phi_i$ itself, but of the difference between~$\phi_i$ and the
mechanical action of~$\gamma$:
\begin{multline}
  \label{eq:58}
  \ddot\phi_i(t, \gamma(t))
  - \frac{\diff}{\diff t}L(t, \gamma(t),
  \dot{\gamma}(t)) \\
  = \ddot{\gamma}(t)\cdot( p_i^{\gamma}(t)
  -  p_*^{\gamma}(t))
  + (\dot{\gamma}(t) -  v_i^{\gamma}(t))\cdot\dot{ p}_i^{\gamma}(t) \\
  - \Bigl[\pd{}t H(t, \gamma(t),  p_i^{\gamma}(t))
  + \dot{\gamma}(t)\cdot \nabla_{ x}
  H(t, \gamma(t),  p_i^{\gamma}(t))\Bigr] \\
  - \Bigl[\pd{}t L(t, \gamma(t), \dot{\gamma}(t))
  + \dot{\gamma}(t)\cdot \nabla_{ x}
  L(t, \gamma(t), \dot{\gamma}(t))\Bigr],
\end{multline}
where $ p_*^{\gamma}(t) = \nabla_{ v}L(t,
\gamma(t), \dot{\gamma}(t))$ is the value of
momentum corresponding to the velocity~$\dot{\gamma}(t)$. %
In particular at time~$t_0$ we have
\begin{equation}
  \label{eq:59}
  \ddot\phi_i - \frac{\diff L}{\diff t}
  =  a\cdot( p_i -  p^*) + ( v^* -  v_i)\cdot f_i
  - ([H]_i + [L]_i),
\end{equation}
where $ p^* =  p_*^{\gamma}(t_0)$ is the usual
admissible momentum (cf.\ \eqref{eq:24}), $ v_i = 
v_i^{\gamma}(t_0)$, $ f_i = \dot{
  p}_i^{\gamma}(t_0)$, and $[H]_i$, $[L]_i$ denote values of
the two square brackets at~$t = t_0$. %

Consider now an integral curve $\gamma$ that is determined by
intersection of smooth branches $\phi_i$ for some~$i \in \mathcal
I$. %
Two conditions must hold for small $t - t_0 > 0$ along this curve:
\begin{itemize}
\item[(i)] the velocity $\dot{\gamma}(t)$ must be admissible
  at~$(t, \gamma(t))$, i.e., be the center of the ``Bregman
  sphere'' containing all~$ v_i^{\gamma}(t)$ at its boundary;
\item[(ii)] values of the remaining branches at $(t, \gamma(t))$
  must be greater than the common value of $\phi_i(t, \gamma(t))$.
\end{itemize}

Define the piecewise linear concave function
\begin{equation}
  \label{eq:61}
  F( a) = \min_{i\in \mathcal I}\,
  \Bigl( a\cdot( p_i -  p^*)
  + ( v^* -  v_i)\cdot f_i - ([H]_i + [L]_i)\Bigr).
\end{equation}
Note that the velocity $ v^*$ of the curve~$\gamma$ at
time~$t_0$ is known, and therefore the values of $ f_i$ are the
same for any integral curve~$\gamma$, so the function $F$ can be
defined without knowing the curve~$\gamma$. %
It is easy to see that the set $\mathcal I( a)$ of indices where
minimum is attained in~\eqref{eq:61} consists of precisely those
indices for which condition (ii) holds. %
This set plays the same role in the
quadratic approximation as did the set $I^*( v)$ in the linear
approximation. %

Condition (i) then becomes an admissibility condition for the
acceleration similar to~\eqref{eq:24}. %
Geometrically, the admissible acceleration $ a$ is the value at
time~$t_0$ of the rate of change of the center of the smallest Bregman
sphere containing all~$ v_i(t)$ for sufficiently small $t - t_0$;
compare this description with the fact that the velocity
$\dot{\gamma}(t)$ is given by this center itself. %
It is clear that depending on the rates $\dot{ v}_i$ (or
equivalently, the values $\dot{ p}_i =  f_i$) at time~$t_0$,
some of the velocities present at~$t_0$ may ``sink'' into the interior
of the Bregman sphere for small $\tau = t - t_0 > 0$, leaving its
surface defined by a smaller set $\{ v_i \colon i \in \mathcal
I_2\}$. %

In a similar way one can define the index sets $\mathcal J_3$,
$\mathcal J_4$, and so on. %
Notice that this decreasing sequence of index sets will stabilize,
since their intersection is nonempty. %
We conjecture that the resulting set $\mathcal J = \cap_{s\ge 1}
\mathcal J_s$ determines the smooth manifold to which the integral
curve $\gamma$ belongs and which determines it uniquely as the
integral curve of the corresponding filed of admissible velocities. %

  \section{Regularization with weak noise}
  \label{sec:regul-with-weak}

  Observe that convergence of superdifferentials makes it possible to
  use other regularization procedures for $\phi$ (e.g., convoluting it with
  a standard mollifier), giving the same limit trajectories. %
  However, one can imagine the following completely different
  regularization of the discontinuous velocity field $\nabla_p H(t, x,
  \nabla\phi(t, x))$. %
  Physically speaking, this regularization corresponds to a zero
  ``Prandtl number,'' in contrast with the mollification approach that
  corresponds to an infinite ``Prandtl number.'' %

  Consider the stochastic equation
  \begin{equation*}
    \diff\gamma^\epsilon
    = \nabla_p H(t, \gamma^\epsilon,
    \nabla\phi(t, \gamma^\epsilon))\, \diff t + \epsilon\,\diff W(t),
  \end{equation*}
  where $W$ is the standard Wiener process. %
  The corresponding stochastic flow is well defined in spite of the
  fact that $\nabla\phi$ does not exist everywhere: whenever the
  trajectory $\gamma^\epsilon$ hits shocks, the noise in the second
  term will instantaneously steer it in a random direction away from
  the singularity. %

  One can show that as $\epsilon\downarrow 0$ the stochastic flow of
  trajectories $\gamma^\epsilon$ tends to a limit flow of trajectories
  $\tilde\gamma$, which is forward differentiable just as the flow
  constructed by the viscous regularization. %
  It is easy to see that, due to the averaging, the forward velocity
  $v^\dagger(t, \tilde\gamma) := \dot{\tilde\gamma}(t + 0)$ must
  belong to the convex hull of limit velocities $v_j$, $j \in
  I(v^\dagger)$. %
  Namely,
  \begin{equation}
    \label{eq:46_1}
    v^\dagger = \sum_{j\in I(v^\dagger)} \pi_j v_j,
    \qquad \pi_j\ge 0,
    \qquad \sum_{j\in I(v^\dagger)} \pi_j=1,
  \end{equation}
  where the velocities $v_j(t, x)$ are Legendre transforms of the
  corresponding momenta $p_j =\nabla\phi_i(t, x)$ at a singular point
  $(t, x)$. %
  The coefficients $\pi_i$ correspond to probabilities that a
  trajectory $\gamma^\epsilon$ visits each of the domains where $\phi
  = \phi_j$. %
  Let us call a velocity $v^\dagger$ satisfying
  condition~\eqref{eq:46_1} \emph{self-consistent}. %

  The self-consistent velocity is a convex combination of
  \emph{velocities} seen by an infinitesimal observer leaving $(t, x)$
  with velocity~$v^\dagger$. %
  Compare this with the definition of admissible momentum $p^*$, which
  is a convex combination of \emph{momenta} see by a similar observer
  moving with velocity~$v^*$. %
  When $H(t, x, p) = |p|^2/2$ and $v = p$, self-consistent velocities
  and admissible velocities coincide. %
  It is however clear that in the case of a general nonlinear Legendre
  transform $v^\dagger \neq v^* = \nabla_p H(t, x, p^*)$. %

  To see this let us consider the case of the shock manifold of
  co-dimension one. %
  Then at every point of the shock manifold there are exactly two
  extreme values of the momenta $p_1$ and $p_2$, and a
  sub-differential is a straight interval $I$ connecting them. %
  Under the viscous regularization, the ``effective,'' or admissible,
  momentum $p^*$ is a point inside $I$. %
  Hence the ``effective,'' i.e., admissible velocity $v^*$ belongs to
  the Legendre image of $I$, which is the curve $J(s)=\{\nabla_p
  H(t,x,sp_1+(1-s)p_2)\colon 0 \leq s \leq 1\}$. %
  On the contrary, under the weak noise regularization the effective
  velocity $v^\dagger$ belongs to a straight interval $\bar J$
  connecting the extreme velocities $v_1=\nabla_pH(t,x,p_1)$ and
  $v_2=\nabla_pH(t,x,p_2)$. %
  It is easy to see that when $d \geq 3$ generally the curves $J$ and
  $\bar J$ do not intersect outside of the end-points, hence the
  effective velocities $v^*$ and $v^\dagger$ are necessarily
  different. %
  Of course in the one-dimensional case $J$ and $\bar J$ coincide. %
  The only other case when $J = \bar J$ is when the Legendre transform
  is a linear map, or equivalently when the Hamiltonian $H$ is
  quadratic in the momentum variable. %
  In both cases uniqueness of generalized characteristics is a
  well-established fact \cite{Brezis.H:1973,Dafermos.C:2005,Cannarsa.P:2004,Bogaevsky.I:2004,Bogaevsky.I:2006,Cannarsa.P:2009}. %

  Let $(t, x)$, $x \in \mathbf{R}^d$, be a singular point where
  $\partial\phi(t, x)$ is a $d$-dimensional simplex. %
  Obviously $v^\dagger$ is uniquely defined in the one-dimensional
  case. %
  Using straightforward but somewhat cumbersome analysis of all
  particular cases, one can check that in two dimensions there is a
  unique self-consistent value of velocity $v^\dagger(t, x)$, too. %
  However when $d \ge 3$, it is possible to construct a convex
  Hamiltonian and a piecewise linear viscosity solution~$\phi$ such
  that at a certain singular point $(t, x)$ there are three
  self-consistent values of velocity. %

  Observe that momenta form a vector space dual to that of velocities;
  fix a basis in the velocity space and use the standard scalar
  product to map momenta and velocities to the same
  space~$\mathbf{R}^3$. %
  We shall denote the coordinates by $x$, $y$, $z$ and write vectors
  as $a = (a_x, a_y, a_z)^{\mathsf T}$. %

  Perform a translation of $t$ and $x$ variables such that $t_0 = 0$,
  $x_0 = 0$ and choose momenta to be represented by vertices of a
  regular tetrahedron centered at the origin and symmetric with
  respect to the $xy$ and $xz$ planes:
  \begin{equation}
    \label{eq:39_1}
    p_1 = \begin{pmatrix} 1 \\ 0 \\ 1 \end{pmatrix},\
    p_2 = \begin{pmatrix} 1 \\ 0 \\ -1 \end{pmatrix},\
    p_3 = \begin{pmatrix} -1 \\ 1 \\ 0 \end{pmatrix},\
    p_4 = \begin{pmatrix} -1 \\ -1 \\ 0 \end{pmatrix}.
  \end{equation}
  Denote by $H_i$, $1\le i\le 4$, the corresponding values of the
  Hamiltonian (to be fixed later). %
  This set of momenta corresponds to the solution
  \begin{equation}
    \label{eq:40_1}
    \phi(t, x) = \min(x + z - tH_1, x - z - t H_2, y - x - t H_3,
    -x - y - t H_4)
  \end{equation}
  to the Hamilton--Jacobi equation~\eqref{eq:1} as $t \ge 0$. %

  If a set of velocities $\{v_i\colon 1\le i\le 4\}$, is the Legendre
  image of this set of momenta, they must satisfy conditions $H(p_i) +
  v_i\cdot(p_j - p_i) < H(p_j)$, or equivalently
  \begin{equation}
    \label{eq:41_1}
    -v_i\cdot p_i + H_i < -v_i\cdot p_j + H_j
  \end{equation}
  for all $i\neq j$. %
  Conversely, for any set of vectors $v_i$ and numbers~$H_i$, $1\le
  i\le 4$, that satisfy these conditions there exists a strictly
  convex Hamiltonian $H(p)$ such that $H(p_i) = H_i$ and $\nabla_p
  H(p_i) = v_i$. %
  To see this, define $\hat H(p) = \max\{H_i + v_i\cdot(p - p_i)\colon
  1\le i\le 4\}$, which satisfies $H(p_i) = H_i$ because
  of~\eqref{eq:41_1}, and ``smooth out'' the function~$\hat H$ to get
  strict convexity in such a way that the values at~$p_i$ are
  preserved. %

  Now take $H_i = |p_i|^2/2$ (velocities $v_i = \nabla_p H(p_i)$ are
  not yet fixed, so the Hamiltonian need not, and won't, coincide with
  $H(p) = |p|^2/2$ everywhere). %
  Conditions~\eqref{eq:41_1} are equivalent to the requirement that for
  each $1\le i\le 4$, the momentum vector closest to~$v_i$ is~$p_i$:
  indeed, adding $\frac 12|v_i|^2$ to both sides of~\eqref{eq:41_1}, we
  get $\frac 12|v_i - p_i|^2 < \frac 12|v_i - p_j|^2$ for all $j \neq
  i$. %
  In other words, each $v_i$ belongs to the Voronoi cell of~$p_i$. %

  Let now velocities be given by
  \begin{equation}
    \label{eq:42_1}
    v_1 = \begin{pmatrix} -\frac 12 \\ 0 \\ 2 \end{pmatrix},\
    v_2 = \begin{pmatrix} -\frac 12 \\ 0 \\ -2 \end{pmatrix},\
    v_3 = \begin{pmatrix} \frac 12 \\ 2 \\ 0 \end{pmatrix},\
    v_4 = \begin{pmatrix} \frac 12 \\ -2 \\ 0 \end{pmatrix}.
  \end{equation}
  All these velocities are in Voronoi cells of momenta with
  corresponding indices, which ensures that $v_i = \nabla_p H(p_i)$
  for a suitable convex Hamiltonian. %
  Consider now $v' = \frac 12 v_1 + \frac 12 v_2 = (-\frac 12, 0,
  0)^{\mathsf T}$. %
  It is easy to check using~\eqref{eq:40_1} for~$\Phi(1, v)$ that $I(v')
  = \{1, 2\}$, so $v'$ is self-consistent. %
  But symmetry implies that $v'' = \frac 12 v_3 + \frac 12 v_4$ is
  also self-consistent. %
  Moreover, the arithmetic average $v''' = 0$ of all $v_i$ is clearly
  self-consistent as well, which gives three distinct self-consistent
  values of velocity at the same point. %

  \section{Conclusions}
  \label{sec:conclusions}

  We conclude with a list of a few open problems concerning the
  approaches presented above, mostly the viscous regularization. %
  Of course the most important of these problems is the issue of
  uniqueness of the limit trajectories, discussed in
  Subsection~\ref{sec:uniq-probl-limit} above. %

  Furthermore, the flow of limit trajectories~$\gamma$, seen as a
  family of continuous maps of variational origin from initial
  coordinates $y = \gamma(0)$ to current coordinates $x = \gamma(t)$,
  is clearly relevant for optimal transportation problems
  \cite{Gangbo.W:1996,Villani.C:2009}. %
  An interesting problem suggested by B.~Khesin is to study the
  extremal properties of this flow. %
  Indeed it is known from \cite{Khesin.B:2007} that before the first
  shock formation the flow $\gamma_{ y}$ is an action minimizing flow
  of diffeomorphisms, while the first shock formation time $t^*$ marks
  a conjugate point in the corresponding variational problem. %
  According to the suggested view, the flow constructed above may be
  seen as a kind of saddle-point, rather than minimum, for a suitable
  transport optimization problem. %

  Finally, we have seen in Section~\ref{sec:regul-with-weak} that in
  dimensions $d \ge 3$ the self-consistent velocity, which plays the
  same role for a flow regularized with weak noise that the admissible
  velocity does for the viscosity regularization, may fail to be
  defined uniquely. %
  It is an interesting problem nevertheless to see whether a unique
  limiting flow still exists in the limit of weak noise in spite of
  nonuniqueness of admissible velocity.
  This problem carries a certain similarity with the problem of limit
  behaviour for one-dimensional Gibbs measures in the zero-temperature
  limit, in the case of nonunique ground states.

\begin{acknowledgements}
Some results of this paper, including the proof of
  Theorem~1, appeared in conference proceedings
  \cite{Khanin.K:2010a}. %
In the course of this work we benefitted from valuable
remarks of J{\'e}r{\'e}mie Bec, Patrick Bernard, Ilya
Bogaevsky, Yann Brenier, Philippe Choquard, Michael Dabkowski, Uriel
Frisch, Boris Khesin, and Thomas Str{\"o}mberg. %
It is a pleasure to recognize their help as well as the unique
environment of the Observatoire de la C{\^o}te d'Azur, where this work
was started and advanced. %
\end{acknowledgements}

\bibliographystyle{spmpsci}
\bibliography{dynamicsHJE}   

\begin{thebibliography}{10}
\providecommand{\url}[1]{{#1}}
\providecommand{\urlprefix}{URL }
\expandafter\ifx\csname urlstyle\endcsname\relax
  \providecommand{\doi}[1]{DOI~\discretionary{}{}{}#1}\else
  \providecommand{\doi}{DOI~\discretionary{}{}{}\begingroup
  \urlstyle{rm}\Url}\fi

\bibitem{Bec.J:2007b}
Bec, J., Khanin, K.: Burgers turbulence.
\newblock Physics Reports \textbf{447}(1--2), 1--66 (2007)

\bibitem{Bogaevsky.I:2002}
Bogaevsky, I.A.: Perestroikas of shocks and singularities of minimum functions.
\newblock Physica D: Nonlinear Phenomena \textbf{173}(1--2), 1--28 (2002)

\bibitem{Bogaevsky.I:2004}
Bogaevsky, I.A.: Matter evolution in burgulence.
\newblock arXiv:math-ph/0407073 (2004)

\bibitem{Bogaevsky.I:2006}
Bogaevsky, I.A.: Discontinuous gradient differential equations and trajectories
  in the calculus of variations.
\newblock Sbornik Math. \textbf{197}(12), 1723--1751 (2006)

\bibitem{Bregman.L:1967}
Bregman, L.M.: A relaxation method of finding a common point of convex sets and
  its application to the solution of problems in convex programming.
\newblock USSR Comp. Math. and Math. Phys. \textbf{7}, 200--217 (1967)

\bibitem{Brezis.H:1973}
Brezis, H.: Op{\'e}rateurs maximaux monotones et semi-groupes de contractions
  dans les espaces de {Hilbert}, \emph{North-Holland Mathematical Studies},
  vol.~5.
\newblock North-Holland, Amsterdam (1973)

\bibitem{Cannarsa.P:2004}
Cannarsa, P., Sinestrari, C.: Semiconcave functions, Hamilton-Jacobi equations,
  and optimal control, \emph{Progress in Nonlinear Differential Equations and
  their Applications}, vol.~58.
\newblock Birkh{\"a}user Boston, Inc., Boston, MA (2004)

\bibitem{Cannarsa.P:2009}
Cannarsa, P., Yu, Y.: Singular dynamics for semiconcave functions.
\newblock J. European Math. Soc.  (2009)

\bibitem{Clarke.F:1990}
Clarke, F.H.: Optimization and Nonsmooth Analysis, \emph{Classics in Applied
  Mathematics}, vol.~5.
\newblock SIAM (1990)

\bibitem{Crandall.M:1992}
Crandall, M.G., Ishii, H., Lions, P.L.: User's guide to viscosity solutions of
  second order partial differential equations.
\newblock Bull. Amer. Math. Soc. (N.S.) \textbf{27}(1), 1--67 (1992)

\bibitem{Dafermos.C:2005}
Dafermos, C.M.: Hyperbolic conservation laws in continuum physics,
  \emph{Grundlehren der Mathematischen Wissenschaften [Fundamental Principles
  of Mathematical Sciences]}, vol. 325, 2 edn.
\newblock springer (2005)

\bibitem{E.W:2000}
E, W., Khanin, K., Mazel, A., Sinai, Y.: Invariant measures for burgers
  equation with stochastic forcing.
\newblock Ann. of Math. (2) \textbf{151}(3), 877--960 (2000)

\bibitem{Fathi.A:2016}
Fathi, A.: Weak KAM theorem in Lagrangian dynamics.
\newblock Cambridge Univ. Press, Cambridge (2016)

\bibitem{Fleming.W:2005}
Fleming, W.H., Soner, H.M.: Controlled Markov processes and viscosity
  solutions, \emph{Stochastic Modelling and Applied Probability}, vol.~25, 2
  edn.
\newblock Springer, New York (2005)

\bibitem{Gangbo.W:1996}
Gangbo, W., McCann, R.J.: The geometry of optimal transportation.
\newblock Acta Math. \textbf{177}(2), 113--161 (1996)

\bibitem{Hopf.E:1950}
Hopf, E.: The partial differential equation $u_t+uu_x=\mu u_{xx}$.
\newblock Comm. Pure Appl. Math. \textbf{3}, 201--230 (1950)

\bibitem{Khanin.K:2010a}
Khanin, K., Sobolevski, A.: Particle dynamics inside shocks in hamilton-jacobi
  equations.
\newblock Philosophical Transactions of the Royal Society of London
  \textbf{368}, 1579--1593 (2010)

\bibitem{Khesin.B:2007}
Khesin, B.A., Misio{\l}ek, G.: Analysis and Singularities, \emph{Tr. Mat. Inst.
  Steklova}, vol. 259, chap. Shock waves for the Burgers equation and
  curvatures of diffeomorphismgroups.
\newblock Nauka, Moscow (2007)

\bibitem{Kruzhkov.S:1975}
Kru\v{z}kov, S.N.: Generalized solutions of hamilton--jacobi equations of
  eikonal type.
\newblock Mat. Sb. \textbf{98(140)}(3(11)), 450--493 (1975)

\bibitem{Lax.P:1954}
Lax, P.D.: Weak solutions of nonlinear hyperbolic equations and their numerical
  computaton.
\newblock Comm. Pure Appl. Math. \textbf{7}, 159--193 (1954)

\bibitem{Lax.P:1957}
Lax, P.D.: Hyperbolic systems of conservation laws.
\newblock cpam \textbf{10}, 537--566 (1957)

\bibitem{Lions.P:1982}
Lions, P.L.: Generalized solutions of Hamilton--Jacobi equations,
  \emph{Research Notes in Mathematics}, vol.~69.
\newblock Pitman (Advanced Publishing Program), Boston, Mass.--London (1982)

\bibitem{Oleuinik.O:1954}
Ole{\u\i}nik, O.A.: On cauchy's problem for nonlinear equations in a class of
  discontinuous functions.
\newblock Doklady Akad. Nauk SSSR (N.S.) \textbf{95}(3), 451--455 (1954)

\bibitem{Rockafellar.R:1970}
Rockafellar, R.T.: Convex analysis, \emph{Princeton Mathematical Series},
  vol.~28.
\newblock Princeton Univ. Press, Princeton, NJ (1970)

\bibitem{Stromberg.T:2013}
Str{\"o}mberg, T.: Propagation of singularities along broken characteristics.
\newblock Nonlinear Analysis: Theory, Methods \& Applications \textbf{85},
  93--109 (2013)

\bibitem{Villani.C:2009}
Villani, C.: Optimal transport: Old and new, \emph{Grundlehren der
  Mathematischen Wissenschaften [Fundamental Principles of Mathematical
  Sciences]}, vol. 338.
\newblock springer (2009)

\end{thebibliography}

\end{document}